\documentclass[11pt,reqno]{amsart}

\usepackage[T1]{fontenc}
\usepackage[utf8]{inputenc}
\usepackage{lmodern}
\usepackage[a4paper,margin=27mm]{geometry}
\usepackage{amsmath,amssymb,mathtools}
\usepackage{booktabs}
\usepackage[expansion=false]{microtype}
\usepackage{xcolor}
\usepackage[colorlinks=true,linkcolor=blue!55!black,citecolor=blue!55!black,urlcolor=blue!55!black]{hyperref}
\hypersetup{pdftitle={Complex slices, inverses, and deformations of a three-dimensional real algebra},
pdfauthor={Alejandro Jes\'us Bermejo Vald\'es},
pdfsubject={},pdfkeywords={},pdfcreator={}}

\numberwithin{equation}{section}

\newtheorem{theorem}{Theorem}[section]
\newtheorem{proposition}[theorem]{Proposition}

\newtheorem{corollary}[theorem]{Corollary}
\theoremstyle{definition}
\newtheorem{definition}[theorem]{Definition}
\newtheorem{remark}[theorem]{Remark}

\DeclareMathOperator{\Der}{Der}
\DeclareMathOperator{\Aut}{Aut}
\DeclareMathOperator{\End}{End}
\DeclareMathOperator{\Cent}{Cent}
\newcommand{\B}{\mathcal A}
\newcommand{\R}{\mathbb R}
\newcommand{\C}{\mathbb C}
\newcommand{\one}{\mathbf 1}
\newcommand{\spanR}{\operatorname{span}_{\R}}
\newcommand{\assoc}[3]{(#1,#2,#3)}

\title[Complex slices, inverses, and deformations]
{Complex slices, inverses, and deformations of a three-dimensional real algebra}

\author{Alejandro Jes\'us Bermejo Vald\'es}
\address{Riojan Health Service, Piqueras 98, 26006 Logro\~no, La Rioja, Spain}
\email{ajbermejo@riojasalud.es}

\subjclass[2020]{Primary 17A60; Secondary 17A36, 17D25}
\keywords{nonassociative algebra, Lie-admissible algebra, multiplication algebra,
central simple algebra, nuclei, power-associativity, deformation}

\begin{document}

\begin{abstract}
We study the real three-dimensional unital algebra with basis
$\{\one,i,j\}$ and products
\[
 i^2=j^2=-\one,\qquad ij=0,\qquad ji=i+j.
\]
Its quadratic and power-associative elements form the union of exactly three complex
subalgebras; their nonzero elements are precisely those with a two-sided
inverse, which is unique. Every nonzero subalgebra contains the unit, and
every element outside these three planes generates the whole algebra.
We classify one-sided inverses, including the singular cases, and show
that two-sided invertibility does not imply cancellation. The multiplication
algebra is $M_3(\R)$, so the algebra is absolutely central simple.
Its derivations vanish, its automorphism group is trivial, and its commutator
Lie algebra is $\R\oplus\mathfrak{aff}(1)$. Its unique antiautomorphism is
involutive, and its complexification admits no complex-linear structurable
involution. Finally, replacing $ji=i+j$
by $ji=t(i+j)$ gives real algebras $\B_t$ with
$\B_s\cong\B_t$ exactly when $s^2=t^2$. We describe the exceptional
parameters and thereby exhibit nontrivial deformations of the original algebra.
\end{abstract}

\maketitle
\enlargethispage{3pt}

\section{Introduction}

We study a three-dimensional real unital algebra in which two-sided
invertibility does not imply cancellation. Its multiplication table also
provides explicit failures of power-associativity and multiplicativity of
the Euclidean quadratic form.

The main feature is the coincidence of three loci: quadratic elements,
power-associative elements, and, after excluding zero, elements with a
two-sided inverse. They are governed by the polynomial
$bc(b-c)=\det[\one,x,x^2]$, where $x=a\one+bi+cj$.
The same polynomial detects when one element generates the whole algebra.
This gives an explicit setting in which to distinguish inversion,
cancellation, and associativity of powers.

We determine the remaining structure directly from the product. We first
compute the associator and the nuclei.  We then prove simplicity by showing
that the multiplication algebra is the full endomorphism algebra.  The
remaining sections classify the associative two-dimensional subalgebras,
two-sided inverses, one-sided zero divisors, quadratic elements, derivations,
automorphisms, and involutions.  We identify the commutator Lie algebra and
construct an explicit deformation family, distinguishing the absence of
symmetries from rigidity of the multiplication.

Lie-admissibility itself is automatic for every three-dimensional unital
real algebra, as proved in Proposition~\ref{prop:automatic-lie}; it is not a
special identity distinguishing this multiplication. The classification of
complex three-dimensional unital structurable algebras with involution in
\cite{AbdurasulovEraliyevaKaygorodov2026} concerns a more restricted class:
the complexification of the present algebra admits no complex-linear
structurable involution (Proposition~\ref{prop:anti-involution}).
The present work is a structural case study; historical priority
for the multiplication table and the family is not established here.

The member $\B_0$ of the family defined in Section~\ref{sec:deformations}
has $i^2=j^2=-\one$ and $ij=ji=0$. It is the classical Jordan algebra
of the nondegenerate symmetric bilinear form $-\langle\,,\,\rangle$ on $\R^2$
\cite[Chapter IV, Section 2, example D, p.~101]{Schafer1966}.
In $\B_0$, $i(-i+\lambda j)=(-i+\lambda j)i=\one$ for every
$\lambda\in\R$, while $ij=0$. Thus existence of a two-sided inverse
without cancellation already occurs in a classical example. The algebra
$\B=\B_1$ combines this failure with uniqueness of the two-sided inverse,
exactly three complex planes, and trivial automorphisms. The results below
concern this combination and the classification of the displayed family.

We use the standard terminology of finite-dimensional nonassociative
algebras~\cite{Schafer1966}.  Lie-admissible and Malcev-admissible algebras are
understood in the usual sense; see
\cite{LauferTomber1962,GozeRemm2004,Myung1986}.

Throughout the paper the ground field is $\R$.  Scalar extensions will be
introduced explicitly when needed.  All products are written by juxtaposition,
and no reassociation is made unless it has been justified.

\section{Definition and elementary identities}

\begin{definition}\label{def:B}
Let $\B$ be the real vector space with ordered basis $\{\one,i,j\}$.  Define a
bilinear multiplication by the table
\begin{equation}\label{eq:table}
\begin{array}{c|ccc}
     &\one&i&j\\ \hline
\one &\one&i&j\\
i    &i&-\one&0\\
j    &j&i+j&-\one
\end{array}.
\end{equation}
Equivalently, after identifying $a\one+bi+cj$ with $(a,b,c)$,
\begin{equation}\label{eq:product}
(a,b,c)(d,e,f)
=\bigl(ad-be-cf,\;ae+bd+ce,\;af+ce+cd\bigr).
\end{equation}
\end{definition}

Because~\eqref{eq:product} is linear in each argument, it defines an
$\R$-algebra.  Substitution of $(1,0,0)$ on either side shows that $\one$ is a
two-sided identity.  Thus~\eqref{eq:table}, without any additional convention,
is a complete definition of a unital nonassociative algebra.

For $x,y,z\in\B$, write
\[
\assoc{x}{y}{z}:=(xy)z-x(yz)
\]
for the associator.  The left, middle, and right nuclei are respectively
\begin{align*}
N_{\ell}(\B)&=\{x:\assoc{x}{\B}{\B}=0\},\\
N_m(\B)&=\{y:\assoc{\B}{y}{\B}=0\},\\
N_r(\B)&=\{z:\assoc{\B}{\B}{z}=0\}.
\end{align*}
The nucleus is their intersection, and the center is the set of nuclear
elements commuting with every element of $\B$.

\begin{proposition}[Associator formula]\label{prop:associator}
For
\[
x=(a,b,c),\qquad y=(d,e,f),\qquad z=(g,h,r),
\]
the associator is
\begin{equation}\label{eq:associator}
\assoc{x}{y}{z}=
\begin{pmatrix}
bfh-ceh-cer+cfh\\
bfr+ceh-2cfh\\
-ber+2ceh-cfh
\end{pmatrix}.
\end{equation}
In particular,
\begin{equation}\label{eq:third-power}
\assoc{x}{x}{x}=bc(b-c)(i+j).
\end{equation}
\end{proposition}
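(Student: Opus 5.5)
The plan is to use trilinearity and the unit to cut the computation down to the span of $i$ and $j$. The associator $\assoc{x}{y}{z}$ is trilinear. It vanishes whenever one argument is $\one$, since $\one$ is a two-sided identity and each of $(\one y)z$, $\one(yz)$, $(x\one)z$, $x(\one z)$, $(xy)\one$, $x(y\one)$ simplifies. Consequently the associator depends only on the $i,j$-components $(b,c)$, $(e,f)$, $(h,r)$ of $x,y,z$, and
\[
\assoc{x}{y}{z}=\sum_{u,v,w\in\{i,j\}} (\text{coeff})\,\assoc{u}{v}{w},
\]
with coefficients that are products of one entry from each of $(b,c)$, $(e,f)$, $(h,r)$. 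So it suffices to compute the eight basic associators $\assoc{u}{v}{w}$ for $u,v,w\in\{i,j\}$ from the table \eqref{eq:table}.

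I will carry this out as follows.
\begin{itemize}
\item Record the products $ii=-\one$, $ij=0$, $ji=i+j$, $jj=-\one$.
\item For each triple, compute $(uv)w$ and $u(vw)$. For example, $\assoc{i}{i}{i}=(-\one)i-i(-\one)=0$. Next, $\assoc{i}{j}{i}=0\cdot i-i(i+j)=\one$. Then $\assoc{j}{i}{i}=(i+j)i-j(-\one)=-\one+(i+j)+j=-\one+i+2j$. Similarly, $\assoc{j}{j}{i}=-i-j(i+j)=-i-(i+j)+\one=\one-2i-j$. The remaining four, $\assoc{i}{i}{j}$, $\assoc{i}{j}{j}$, $\assoc{j}{i}{j}$, $\assoc{j}{j}{j}$, are handled the same way.
\item Assemble the result. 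The coefficient of $\assoc{i}{j}{i}$ is $bfh$, the coefficient of $\assoc{j}{i}{i}$ is $ceh$, and so on. Collecting components then yields the three polynomials in \eqref{eq:associator}. One consistency check: the term $bfh$ appears only in the first coordinate with coefficient $1$, matching $\assoc{i}{j}{i}=\one$. Also, $ceh$ contributes $(-1,1,2)$, matching $-ceh$, $+ceh$, $+2ceh$ across the coordinates.
\end{itemize}

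For \eqref{eq:third-power}, I will substitute $(d,e,f)=(g,h,r)=(a,b,c)$. The first coordinate becomes $b^2c-cb^2-cbc+c^2b=0$. The second becomes $bc^2+cb^2-2c^2b=bc(b-c)$. The third becomes $-b^2c+2cb^2-c^2b=bc(b-c)$. Together these give $bc(b-c)(i+j)$.

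I expect no conceptual obstacle here. The main risk is bookkeeping in the eight basic associators and in collecting coefficients. I will guard against it by checking the final formula on a few specific triples computed directly from \eqref{eq:product}. Useful checks are $x=y=z=j$, which should give $0$ since $b=0$, and $x=j$, $y=i$, $z=j$.
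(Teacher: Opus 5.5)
Your proposal is correct, but it organizes the computation differently from the paper. The paper works entirely in coordinates. It writes out $xy$ and $yz$ from \eqref{eq:product}, applies \eqref{eq:product} again, and subtracts, so the absence of $a,d,g$ from \eqref{eq:associator} only appears after cancellation. You instead use trilinearity and the vanishing of the associator when $\one$ occupies any slot. This reduces everything to the eight basis associators, which makes the independence from $a,d,g$ structural and turns the work into table lookups.

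Your four displayed values are right. The remaining four are $\assoc{i}{i}{j}=-j$, $\assoc{i}{j}{j}=i$, $\assoc{j}{i}{j}=-\one$ and $\assoc{j}{j}{j}=0$, with coefficients $ber$, $bfr$, $cer$ and $cfr$. Together with $bfh\mapsto\one$, $ceh\mapsto-\one+i+2j$ and $cfh\mapsto\one-2i-j$, these assemble exactly to \eqref{eq:associator}. Your specialization to \eqref{eq:third-power} is also correct.

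A side benefit of your route is that it produces exactly the basis associators the paper later computes separately in Corollary~\ref{cor:identity-failures} and Proposition~\ref{prop:other-identities}. The paper's route, in exchange, is a single mechanical polynomial identity that is easy to hand to a computer algebra check.
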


\begin{proof}
Using~\eqref{eq:product}, first compute
\[
xy=(ad-be-cf,\;ae+bd+ce,\;af+ce+cd)
\]
and
\[
yz=(dg-eh-fr,\;dh+eg+fh,\;dr+fh+fg).
\]
Applying~\eqref{eq:product} once more to $(xy)z$ and $x(yz)$, subtracting
componentwise, and collecting coefficients gives~\eqref{eq:associator}.
Putting $(d,e,f)=(g,h,r)=(a,b,c)$ then yields
\[
\assoc{x}{x}{x}=(0,bc(b-c),bc(b-c)),
\]
which is~\eqref{eq:third-power}.
\end{proof}

\begin{corollary}\label{cor:identity-failures}
The algebra $\B$ is noncommutative, nonassociative, nonalternative,
nonflexible, and not power-associative.
\end{corollary}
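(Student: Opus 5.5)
Each property will be refuted by a single explicit instance, read off from the multiplication table~\eqref{eq:table} or from Proposition~\ref{prop:associator}.

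\emph{Noncommutativity.} The table gives $ij=0$ and $ji=i+j\neq 0$.

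\emph{Nonassociativity.} Specialising~\eqref{eq:associator} to basis elements is enough. Take $x=j$, so $(a,b,c)=(0,0,1)$; take $y=i$, so $(d,e,f)=(0,1,0)$; take $z=i$, so $(g,h,r)=(0,1,0)$. The first component is $bfh-ceh-cer+cfh=-1$. Hence $\assoc{j}{i}{i}\neq 0$. As a direct check, $(ji)i=(i+j)i=-\one+i+j$, while $j(ii)=-j$.

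\emph{Nonalternativity.} It suffices to exhibit $x$ with $\assoc{x}{x}{y}\neq0$ or $\assoc{y}{x}{x}\neq0$. The computation above already gives $\assoc{j}{i}{i}\neq 0$, which violates right alternativity with $y=j$ and $x=i$.

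\emph{Nonflexibility.} We need $\assoc{x}{y}{x}\neq0$ for some $x,y$.
\begin{itemize}
\item[] Try $x=j$ and $y=i$, i.e.\ $(a,b,c)=(g,h,r)=(0,0,1)$ and $(d,e,f)=(0,1,0)$. Then $h=0$ and $r=1$. The components of~\eqref{eq:associator} are $-cer=-1$, $bfr=0$, and $-ber=0$. So $\assoc{j}{i}{j}=-\one\neq0$.
\item[] Directly: $(ji)j=(i+j)j=-\one$, since $ij=0$, while $j(ij)=0$.
\end{itemize}

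\emph{Failure of power-associativity.} By~\eqref{eq:third-power}, $\assoc{x}{x}{x}=bc(b-c)(i+j)$. Choose $x=i+2j$, so $b=1$ and $c=2$. This gives $\assoc{x}{x}{x}=-2(i+j)\neq0$, and therefore $x^2x\neq xx^2$. Power-associativity requires the subalgebra generated by $x$ to be associative, which would force $x^2x=xx^2$. So $\B$ is not power-associative.

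There is no genuine obstacle here. The only care needed is to substitute into~\eqref{eq:associator} with the correct coordinates. For that reason each instance is cross-checked directly against the table.
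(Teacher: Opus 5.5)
Your proof is correct and follows the same route as the paper: explicit witnesses read off from the table and from \eqref{eq:third-power}, including the same witnesses for flexibility, $(ji)j=-\one\neq j(ij)=0$, and for power-associativity, $x=i+2j$. The only difference is that you use $\assoc{j}{i}{i}=-\one+i+2j$ for both nonassociativity and nonalternativity (via failure of right alternativity). The paper instead uses $\assoc{i}{j}{i}=\one$ for nonassociativity and shows that left and right alternativity each fail separately, which is more than the statement needs.
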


\begin{proof}
Noncommutativity follows from $ij=0$ and $ji=i+j$.  Moreover,
\[
\assoc{i}{j}{i}=\one,
\]
so $\B$ is not associative. Left alternativity fails because
$\assoc{i}{i}{j}=-j\neq0$. Right alternativity fails because
\[
(ij)j=0\neq i(jj)=-i,
\]
and flexibility fails because
\[
(ji)j=-\one\neq j(ij)=0.
\]
Finally, for $x=i+2j$, formula~\eqref{eq:third-power} gives
$\assoc{x}{x}{x}=-2(i+j)\neq0$.  Hence the two possible products of three
copies of $x$ disagree, so $\B$ is not power-associative.
\end{proof}

\begin{proposition}[Nuclei and center]\label{prop:nuclei}
One has
\[
N_{\ell}(\B)=N_m(\B)=N_r(\B)=Z(\B)=\R\one.
\]
\end{proposition}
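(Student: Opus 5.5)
The plan is to read all three nuclei off the explicit associator formula~\eqref{eq:associator} of Proposition~\ref{prop:associator}. Since $\one$ is a two-sided identity, $\assoc{\one}{y}{z}=\assoc{x}{\one}{z}=\assoc{x}{y}{\one}=0$. So $\R\one$ lies in each nucleus. Each nucleus is a subspace because the associator is trilinear. It therefore suffices to show that an element with vanishing $i$- and $j$-components is forced in each case. Equivalently, I will show that if the nucleus condition holds for $x=(a,b,c)$ (resp.\ $y$, resp.\ $z$), then $b=c=0$. The variable $a$ (resp.\ $d$, $g$) never appears in~\eqref{eq:associator}, consistent with this.

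\emph{Left nucleus.} Fix $x=(a,b,c)$ and require~\eqref{eq:associator} to vanish for all $(d,e,f)$ and $(g,h,r)$. The second component is $bfr+ceh-2cfh$. Taking $f=r=1$ and $e=h=0$ gives $b=0$. Taking $e=h=1$ and $f=r=0$ gives $c=0$.

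\emph{Middle nucleus.} Fix $y=(d,e,f)$. Using the second component with $b=r=1$ and all other free variables zero gives $f=0$. Then $c=h=1$, with the rest zero, gives $e-2f=0$, so $e=0$.

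\emph{Right nucleus.} Fix $z=(g,h,r)$. Using the second component with $b=f=1$ and the others zero gives $r=0$. Then $c=e=1$ gives $h=0$.

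In each case only the second component is needed, by choosing monomials appropriately. Thus $N_\ell(\B)=N_m(\B)=N_r(\B)=\R\one$.

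\emph{Center.} The nucleus is then $\R\one$. Scalar multiples of $\one$ commute with everything, so $Z(\B)=\R\one$. The only potential obstacle is bookkeeping: ensuring the chosen specializations isolate a single monomial. This is immediate from the displayed formula, so no genuine difficulty is expected. As a sanity check, one can cross-verify with basis associators. For instance, $\assoc{i}{j}{i}=\one$ shows that $i$ is not in the left nucleus and $j$ is not in the middle nucleus. Similarly, $\assoc{i}{i}{j}=-j$ and $(ij)j\ne i(jj)$ exclude $j$ from the right nucleus and $i$ from the middle nucleus.
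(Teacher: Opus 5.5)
Your proof is correct and is essentially the paper's: both specialize the associator formula~\eqref{eq:associator} at basis elements to force $b=c=0$, use unitality to place $\R\one$ in each nucleus, and then read off the center from the nucleus. The only difference is cosmetic; the paper uses a single triple per nucleus, namely $\assoc{n}{i}{j}$, $\assoc{i}{n}{j}$ and $\assoc{i}{j}{n}$, and reads $b$ and $c$ from two different coordinates, whereas you use two triples per nucleus and only the second coordinate.
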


\begin{proof}
Let $n=(a,b,c)$.  Substitution in~\eqref{eq:associator} gives
\[
\assoc{n}{i}{j}=(-c,0,-b),\qquad
\assoc{i}{n}{j}=(0,c,-b),\qquad
\assoc{i}{j}{n}=(b,c,0).
\]
Thus membership in any one of the three nuclei forces $b=c=0$.  Conversely,
scalar multiples of the unit associate with all pairs, so all three nuclei
are $\R\one$.  Their intersection is therefore $\R\one$, and every such
element commutes with $\B$; hence the center is also $\R\one$.
\end{proof}

\section{Multiplication algebra and absolute central simplicity}

For $x\in\B$, let $L_x(y)=xy$ and $R_x(y)=yx$.  The \emph{multiplication
algebra} $\mathcal M(\B)$ is the associative subalgebra of $\End_{\R}(\B)$
generated by all $L_x$ and $R_x$.  Relative to $\{\one,i,j\}$,
\begin{equation}\label{eq:LR}
L_{(a,b,c)}=
\begin{pmatrix}
a&-b&-c\\ b&a+c&0\\ c&c&a
\end{pmatrix},\qquad
R_{(a,b,c)}=
\begin{pmatrix}
a&-b&-c\\ b&a&b\\ c&0&a+b
\end{pmatrix}.
\end{equation}

Recall that a two-sided ideal $I$ of $\B$ satisfies $\B I+I\B\subseteq I$.
The centroid is
\[
\Cent(\B)=\{T\in\End_{\R}(\B):T(xy)=T(x)y=xT(y)
\text{ for all }x,y\in\B\}.
\]

\begin{theorem}\label{thm:multiplication}
The multiplication algebra of $\B$ is the full endomorphism algebra:
\[
\mathcal M(\B)=\End_{\R}(\B)\cong M_3(\R).
\]
Consequently, $\B$ is simple and $\Cent(\B)=\R\,\mathrm{id}_{\B}$.  More
strongly, for every field extension $F/\R$, the scalar extension
$\B_F=\B\otimes_{\R}F$ is simple and has centroid
$F\,\mathrm{id}_{\B_F}$.  Thus $\B$ is absolutely central simple.
\end{theorem}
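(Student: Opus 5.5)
The plan is to show directly that $\mathcal M(\B)$ contains the nine matrix units $E_{pq}$ relative to $\{\one,i,j\}$, or at least enough operators to force equality by a dimension count. Once $\mathcal M(\B)=\End_{\R}(\B)$ is known, the other assertions follow formally.

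First I will list explicit operators from \eqref{eq:LR}. These are $L_i$, $L_j$, $R_i$, $R_j$, together with the differences $D_i:=L_i-R_i$ and $D_j:=L_j-R_j$. One finds
\[
D_i=\begin{pmatrix}0&0&0\\0&0&-1\\0&0&-1\end{pmatrix},\qquad
D_j=\begin{pmatrix}0&0&0\\0&1&0\\0&1&0\end{pmatrix}.
\]
These are rank-one operators with image $\R(i+j)$ and kernel containing $\one$. Next I will multiply them by $L_i,L_j,R_i,R_j$ on both sides to move the image and the kernel around. Composing $L_i$ with $D_j$ sends $\one\mapsto 0$, and the image becomes $\R\, i(i+j)=\R(-\one)$. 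This gives a rank-one operator with image $\R\one$, so $\one$ lies in the image of a rank-one element of $\mathcal M$. Composing on the right with suitable $L_x$ changes the functional, and composing on the left changes the image vector. A rank-one operator $v\otimes\varphi$ lies in $\mathcal M$ whenever $v\in V$ and $\varphi\in W$. Here $V$ is the $\mathcal M$-submodule of $\B$ generated by $i+j$, and $W$ is the right $\mathcal M$-submodule of $\B^*$ generated by the relevant functional. It therefore suffices to show that $\B$ has no proper nonzero invariant subspace under all $L_x,R_x$, and the same for the dual action. This is the main step, and I expect it to be the principal obstacle, although it is a finite computation. For the module $V$: it contains $i+j$, it contains $\one$ (from $L_i(i+j)=-\one$), and it contains $i=L_i\one$, so $V=\B$. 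For the dual I will argue with transposes: the transposed operators $L_x^{T},R_x^{T}$ must have no common invariant proper subspace. Equivalently, $\B$ has no common invariant subspace of dimension $1$ or $2$, and I will check this directly. A common invariant line $\R v$ would be an ideal. Testing $i v,\ jv\in\R v$ for $v=(a,b,c)$ with the first coordinates forces $v=0$. A $2$-dimensional invariant subspace is the kernel of a functional $\varphi$ that is a common eigenvector of all $L_x^{T},R_x^{T}$. Solving $L_i^{T}\varphi=\lambda\varphi$ and $L_j^{T}\varphi=\mu\varphi$ on $3$-vectors is quick, and checking $R_i^{T}$ then shows there are no solutions. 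By Burnside's theorem, which applies once absolute irreducibility is known, irreducibility over $\C$ gives $\mathcal M(\B)\otimes\C=M_3(\C)$ and hence $\mathcal M(\B)=M_3(\R)$. So I will run the invariant-subspace check over $\C$ to cover both cases. A common complex eigenvector $v$ of $L_i,L_j,R_i,R_j$ would require $D_i v$ and $D_j v$ to be proportional to $v$, so either $v\in\R(i+j)$ or both $D_i v=0$ and $D_j v=0$. The latter means $v\in\C\one$, which is excluded because $L_i\one=i$. The former is excluded because $L_i(i+j)=-\one$. The dual check uses the transposes in the same way.

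With $\mathcal M(\B_F)=\mathcal M(\B)\otimes F=\End_F(\B_F)$ for every extension $F$, the remaining claims are formal. An ideal is an $\mathcal M$-invariant subspace, so it is $0$ or everything. The product is nonzero because $\one\cdot\one=\one$, so $\B_F$ is simple. The centroid is exactly the commutant of $\mathcal M(\B_F)$ in $\End_F(\B_F)$, which is the center $F\,\mathrm{id}$ of the full matrix algebra. This gives absolute central simplicity.
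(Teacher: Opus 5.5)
Your proof is correct, but it takes a different route from the paper. The paper gives a spanning certificate. The nine words $I,L_i,L_j,R_i,R_j,L_i^2,L_iL_j,L_jL_i,L_iR_i$ have vectorization determinant $-1$, which is a unit in every field, so the same nine operators span $\End_F(\B_F)$ for all $F/\R$ at once. That needs no structure theory, but the $9\times9$ determinant is opaque and in practice machine-checked.

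You instead observe that $L_x-R_x$ is $y\mapsto[x,y]$, which by the commutator formula has rank at most one with image $\R(i+j)$. You then reduce fullness to irreducibility, check irreducibility over $\C$ by $3\times3$ eigenvector computations, and apply Burnside. From there you pass to $\R$ by a dimension count and to every $F$ by tensoring. This can be checked entirely by hand, and it explains where fullness comes from: the one-dimensional derived algebra $[\B,\B]$ forces rank-one elements into $\mathcal M(\B)$.

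Your own first idea in fact finishes the proof without Burnside or complexification. Write $v\otimes\varphi$ for $y\mapsto\varphi(y)v$, and $e_1^*,e_2^*,e_3^*$ for the coordinate functionals of $\{\one,i,j\}$. Then $D_j=(i+j)\otimes e_2^*$. The vectors $T(i+j)$ with $T\in\mathcal M(\B)$ include $i+j$, $L_i(i+j)=-\one$, and $L_iL_i(i+j)=-i$, so they fill $\B$. The functionals $e_2^*\circ S$ include $e_2^*$, $e_2^*\circ L_i=e_1^*$, and $e_2^*\circ D_i=-e_3^*$, so they fill the dual. Hence the products $TD_jS=(T(i+j))\otimes(e_2^*\circ S)$ give every rank-one operator, over $\R$ and verbatim over any $F$.

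Two small slips. First, in the complex check the line is $\C(i+j)$, not $\R(i+j)$. Second, the dual check is not literally ``the same way'': $D_i^{\mathsf T}$ and $D_j^{\mathsf T}$ share the kernel $\{\varphi:\varphi(i+j)=0\}$ rather than an image, so that case split leaves a plane of functionals to exclude. Your direct alternative handles it. The eigenvectors of $L_i^{\mathsf T}$ are the multiples of $e_3^*$ and of $e_1^*\pm\sqrt{-1}\,e_2^*$, and none of these is an eigenvector of $L_j^{\mathsf T}$.
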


\begin{proof}
Consider the following nine elements of $\mathcal M(\B)$:
\begin{equation}\label{eq:nine-ops}
I,\ L_i,\ L_j,\ R_i,\ R_j,\ L_i^2,\ L_iL_j,\ L_jL_i,\ L_iR_i.
\end{equation}
For a $3\times3$ matrix $X=(x_{uv})$, let
\[
\operatorname{vec}(X)=(x_{11},x_{12},x_{13},x_{21},x_{22},x_{23},
x_{31},x_{32},x_{33})^{\mathsf T}.
\]
Using~\eqref{eq:LR}, the matrix whose columns are the vectorizations of the
operators in~\eqref{eq:nine-ops}, in the displayed order, is
\begin{equation}\label{eq:Q}
Q=\begin{pmatrix}
1&0&0&0&0&-1&0&0&-1\\
0&-1&0&-1&0&0&-1&0&0\\
0&0&-1&0&-1&0&0&0&-1\\
0&1&0&1&0&0&0&1&0\\
1&0&1&0&0&-1&0&0&-1\\
0&0&0&1&0&0&-1&0&0\\
0&0&1&0&1&0&0&1&0\\
0&0&1&0&0&0&0&-1&0\\
1&0&0&1&0&0&0&0&0
\end{pmatrix},
\qquad \det Q=-1.
\end{equation}
Thus the nine operators are linearly independent.  Since
$\dim_{\R}\End_{\R}(\B)=9$, they form a basis of $\End_{\R}(\B)$, proving
$\mathcal M(\B)=\End_{\R}(\B)$.

Every two-sided ideal is invariant under every $L_x$ and $R_x$, hence under
$\mathcal M(\B)$.  If $0\neq I\triangleleft\B$, choose $0\neq v\in I$.  For
an arbitrary $w\in\B$, there is an endomorphism $T$ with $T(v)=w$.  Since
$T\in\mathcal M(\B)$, invariance gives $w\in I$; hence $I=\B$.

An element of the centroid commutes with all left and right multiplications,
and therefore with $\mathcal M(\B)=\End_{\R}(\B)$.  The commutant of the full
matrix algebra in itself consists exactly of scalar matrices, so
$\Cent(\B)=\R\,\mathrm{id}_{\B}$.

Finally, the calculation~\eqref{eq:Q} remains valid after extension of
scalars: its determinant is still $-1\in F$.  Hence
$\mathcal M(\B_F)=\End_F(\B_F)$, and the preceding ideal and centroid
arguments apply verbatim over $F$.
\end{proof}

\begin{remark}[Operators and algebra representations]
The map $x\mapsto L_x$ is injective since $L_x\one=x$, but it does not
preserve multiplication: $L_iL_j(i)=-\one$ while $L_{ij}(i)=0$.
Thus the matrices represent multiplication operators, not the product of
$\B$ as an associative matrix product. Indeed, an injective
product-preserving map from $\B$ into any associative algebra would force
all associators of $\B$ to vanish.
\end{remark}

\begin{remark}
Simplicity alone does not imply a division property, even for associative
algebras: full matrix algebras already provide counterexamples.  Here
$ij=0$ with $i,j\neq0$, although $\B$ is simple.  The specifically
nonassociative phenomenon in Section~\ref{sec:inverses} is that a two-sided
invertible element can nevertheless annihilate a nonzero element.
In particular, the identification of the three planes with $\C$ in
Theorem~\ref{thm:slices}, the real subalgebra classification in
Corollary~\ref{cor:all-subalgebras},
the inverse classification in Theorem~\ref{thm:inverse}, the real solution
sets in Propositions~\ref{prop:quadratic} and~\ref{prop:roots-minus-one},
and the real isomorphism types and subalgebra counts in
Proposition~\ref{prop:exceptional} depend on the ground field.
For example, in $\B\otimes_{\R}\C$, if $\iota^2=-1$ is a scalar, then
$(\one+\iota i)/2$ is a nontrivial idempotent.
The symmetry results in Theorem~\ref{thm:symmetries} and
Proposition~\ref{prop:anti-involution}, however, hold over every extension
field of $\R$, as stated there.
\end{remark}

\section{Associative slices and the power-associative locus}

An element $x$ is called \emph{quadratic} if
$x^2\in\spanR\{\one,x\}$. This convention includes scalar elements.
The subalgebra generated by a set means the smallest linear subspace closed
under multiplication and containing that set; the unit is not added unless
specified. An element is \emph{power-associative} if all products of any
fixed positive number of copies of it agree, irrespective of parenthesization.

Set $k=i+j$ and define
\[
\C_i=\spanR\{\one,i\},\qquad
\C_j=\spanR\{\one,j\},\qquad
\C_k=\spanR\{\one,k\}.
\]

\begin{theorem}\label{thm:slices}
Each of $\C_i,\C_j,\C_k$ is an associative unital subalgebra isomorphic to
$\C$.  They are precisely the two-dimensional subalgebras of $\B$ that
contain $\one$.  Moreover, for $x=(a,b,c)$ the following are equivalent:
\begin{enumerate}
\item the subalgebra generated by $\one$ and $x$ is associative;
\item all powers of $x$ are independent of parenthesization;
\item $\assoc{x}{x}{x}=0$;
\item $bc(b-c)=0$;
\item $x\in\C_i\cup\C_j\cup\C_k$;
\item $x$ is quadratic.
\end{enumerate}
\end{theorem}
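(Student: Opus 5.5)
First I will verify the three planes directly from \eqref{eq:table}. One has $i^2=-\one$ and $j^2=-\one$. For $k=i+j$,
\[
k^2=i^2+ij+ji+j^2=-2\one+(i+j)=-2\one+k .
\]
Thus $\C_k$ is closed under multiplication, and $(k-\tfrac12\one)^2=-\tfrac74\one$. Each plane is therefore spanned by $\one$ and one element $u$ with $u^2=-\one$ (after rescaling). It is a commutative algebra generated by a single element satisfying a quadratic relation, hence it is $\R[u]/(u^2+1)\cong\C$. Associativity follows because the plane is the image of the polynomial algebra; alternatively, \eqref{eq:associator} vanishes whenever all three arguments have $(b,c)$ proportional to $(1,0)$, $(0,1)$ or $(1,1)$, which one checks by substitution.

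Next I will prove the cycle of equivalences. The implication (1)$\Rightarrow$(2) is immediate, and (2)$\Rightarrow$(3) is trivial. The step (3)$\Leftrightarrow$(4) is \eqref{eq:third-power}. For (4)$\Leftrightarrow$(5), the factorization $bc(b-c)=0$ means $b=0$, $c=0$ or $b=c$, i.e.\ $x\in\C_j$, $\C_i$ or $\C_k$ respectively. For (5)$\Rightarrow$(1), the subalgebra generated by $\one$ and $x$ lies in the associative plane containing $x$. For (5)$\Rightarrow$(6), the element $x^2$ lies in that plane, which equals $\spanR\{\one,x\}$ when $x\notin\R\one$; scalars are quadratic by convention. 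For (6)$\Rightarrow$(4), compute
\[
x^2=(a^2-b^2-c^2,\;2ab+bc,\;2ac+bc+c^2)
\]
from \eqref{eq:product}. Then $\det[\one,x,x^2]$ reduces to
\[
\det\begin{pmatrix} b&2ab+bc\\ c&2ac+bc+c^2\end{pmatrix}
= b(bc+c^2)-c\,bc = bc^2 \cdot\ldots
\]
Carrying this out carefully should give $\pm bc(b-c)$, as the introduction asserts. So quadratic means this determinant vanishes, which is (4).

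Finally, the classification of two-dimensional subalgebras containing $\one$. Such a subalgebra is $\spanR\{\one,x\}$ with $x\notin\R\one$, and closure under squaring forces $x$ to be quadratic. By (6)$\Rightarrow$(5), $x$ lies in one of the three planes, so the subalgebra equals that plane.

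The only genuinely computational step is the determinant identity in (6)$\Rightarrow$(4). I expect it to be the main (though modest) obstacle, mainly in getting signs right. If it does not come out cleanly, I would instead subtract $a\one$, replacing $x$ by $x'=bi+cj$, and compute
\[
x'^2=(-b^2-c^2)\one+bc\,i+(bc+c^2)\,j .
\]
Requiring $(bc,\;bc+c^2)$ to be proportional to $(b,c)$ gives $bc\cdot c-b(bc+c^2)=-b^2c$ … I would recheck the product coefficients here, expecting the condition to collapse to $bc(b-c)=0$.
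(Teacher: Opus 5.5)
Your plan is sound: the verification of the three planes, the implications (1)$\Rightarrow$(2)$\Rightarrow$(3)$\Leftrightarrow$(4)$\Leftrightarrow$(5)$\Rightarrow$(1), and (5)$\Rightarrow$(6) are all fine. The one step with real content, (6)$\Rightarrow$(4), is not proved, however. Your formula for $x^2$ is wrong, and you then fall back on what the introduction asserts. By \eqref{eq:product} with $(d,e,f)=(a,b,c)$, the $j$-coordinate of $x^2$ is $af+ce+cd=c(2a+b)$. There is no $c^2$ there, because the only term producing $c^2$ is $c^2j^2=-c^2\one$, which lands in the $\one$-coordinate. With your spurious $c^2$ the determinant comes out as $b^2c$, and your fallback gives $-b^2c$. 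The condition $b^2c=0$ would even exclude $\C_k$, contradicting your own correct computation $k^2=-2\one+k$ a few lines earlier: your general formula at $x=k$ gives $-2\one+i+2j$. Because your classification of the two-dimensional unital subalgebras is deduced from (6)$\Rightarrow$(5), it inherits the same gap.

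The repair is a single line. The correct square is $x^2=\bigl(a^2-b^2-c^2,\,b(2a+c),\,c(2a+b)\bigr)$, so
\[
\det[\one\;x\;x^2]=b\cdot c(2a+b)-c\cdot b(2a+c)=bc(b-c).
\]
Equivalently, for $v=bi+cj$ one has $v^2=-(b^2+c^2)\one+bc(i+j)$, because $ij=0$ and $cb\,ji=bc(i+j)$. Then $(bc,bc)$ is proportional to $(b,c)$ exactly when $bc(b-c)=0$. This second form is the paper's route. The paper first classifies the unital planes by the closure condition $bc(i+j)\in\R v$ and then reads off (6)$\Leftrightarrow$(5). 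You instead prove (6)$\Rightarrow$(4) by the determinant and obtain the classification as a corollary. Once the product is computed correctly, either order works.
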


\begin{proof}
Any unital two-dimensional algebra with basis $\{\one,v\}$ and relation
$v^2=\alpha\one+\beta v$ is isomorphic, by its multiplication table, to
$\R[X]/(X^2-\beta X-\alpha)$, and is therefore associative.
The relations $i^2=j^2=-\one$ show that $\C_i$ and $\C_j$ are copies of
$\C$.  Since
\begin{equation}\label{eq:k-square}
k^2=(i+j)^2=-2\cdot\one+k,
\end{equation}
$\C_k$ is closed and associative.  More explicitly,
\[
\ell:=\frac{2k-\one}{\sqrt7}
\quad\text{satisfies}\quad \ell^2=-\one,
\]
so $\C_k=\R\one\oplus\R\ell\cong\C$.

Let $S$ be a two-dimensional subalgebra containing $\one$.  Then
$S=\spanR\{\one,v\}$ for some $v=a\one+bi+cj$ not in $\R\one$.  Replacing
$v$ by $v-a\one$, we may assume $v=bi+cj$, with $(b,c)\neq(0,0)$.  Directly,
\begin{equation}\label{eq:v-square}
v^2=-(b^2+c^2)\one+bc(i+j).
\end{equation}
Closure of $S$ is equivalent to $bc(i+j)\in\R v$.  If $bc=0$, this gives
$S=\C_i$ or $S=\C_j$.  If $bc\neq0$, comparison of the $i$- and
$j$-coordinates gives $b=c$, and then $S=\C_k$.  This proves the
classification of unital two-dimensional subalgebras.

The implication $(1)\Rightarrow(2)$ is immediate, and
$(2)\Rightarrow(3)$ follows by comparing $(x^2)x$ with $x(x^2)$.  By
Proposition~\ref{prop:associator}, (3) and (4) are equivalent.  The elementary
factorization in (4) is equivalent to $b=0$, $c=0$, or $b=c$, which is exactly
(5).  Finally, each plane in (5) is associative, so $(5)\Rightarrow(1)$.
For a nonscalar $x$, being quadratic is equivalent to closure of
$\spanR\{\one,x\}$, so the classification above proves $(6)\Leftrightarrow(5)$.
Scalar elements satisfy both conditions.
\end{proof}

\begin{corollary}[All subalgebras and single generators]\label{cor:all-subalgebras}
The complete list of real subalgebras, without requiring them to contain
$\one$ in their definition, is
\[
0,\quad \R\one,\quad \C_i,\quad \C_j,\quad \C_k,\quad \B.
\]
In particular, every nonzero subalgebra contains $\one$.  If $bc(b-c)\neq0$,
then $x=(a,b,c)$ generates $\B$, and $\{\one,x,x^2\}$ is a vector-space basis.
\end{corollary}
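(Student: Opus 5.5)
The plan is to first prove the last assertion and then deduce the classification from it together with Theorem~\ref{thm:slices}. The key quantity is the determinant $\det[\one,x,x^2]$. For $x=(a,b,c)$, the product formula~\eqref{eq:product} gives
\[
x^2=\bigl(a^2-b^2-c^2,\;2ab+bc,\;2ac+bc\bigr).
\]
Subtracting $a$ times the unit and using that the determinant only sees the $(i,j)$-components, one finds
\[
\det[\one,x,x^2]=\det\begin{pmatrix} b& 2ab+bc\\ c& 2ac+bc\end{pmatrix}=bc(b-c).
\]
This agrees with the polynomial announced in the introduction. Hence if $bc(b-c)\neq0$, the vectors $\one,x,x^2$ form a basis.

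Next I would show that $x$ generates $\B$ \emph{without adjoining the unit}. This is the step needing care, since the generated subalgebra $S$ is only assumed to contain $x$. The subalgebra $S$ contains $x$, $x^2$, and $x^3$ in both parenthesizations, and by~\eqref{eq:third-power}
\[
(x^2)x-x(x^2)=bc(b-c)k\in S,
\]
so $k\in S$. Then $k^2=-2\one+k$ by~\eqref{eq:k-square} shows $\one\in S$. Therefore $S\supseteq\spanR\{\one,x,x^2\}=\B$.

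For the classification, let $S$ be a nonzero subalgebra. If $S$ contains some $x=(a,b,c)$ with $bc(b-c)\neq0$, then $S=\B$ by the above. Otherwise $S\subseteq\C_i\cup\C_j\cup\C_k$ by Theorem~\ref{thm:slices}. Since a real vector space is never a union of finitely many proper subspaces, $S$ then lies in a single plane $\C_i$, $\C_j$ or $\C_k$. Each of these is a copy of $\C$, a field, and any nonzero subalgebra of $\C$ contains $\one$: for $0\neq y\in S$, the map $L_y$ restricted to the finite-dimensional space $S$ is injective, hence surjective, so $yz=y$ for some $z\in S$, which forces $z=\one$. Thus $S$ contains $\one$ and is a unital subalgebra of dimension $1$ or $2$. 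So $S=\R\one$, or $S$ is one of $\C_i,\C_j,\C_k$ by the classification in Theorem~\ref{thm:slices}. Dimension $3$ gives $S=\B$.

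The main obstacle is ensuring that the unit is generated rather than assumed. This is handled by the associator trick producing $k$, and by the field argument inside each plane.
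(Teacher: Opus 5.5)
Your proof is correct and follows essentially the paper's route. It uses the determinant $\det[\one,x,x^2]=bc(b-c)$, the associator identity $(x^2)x-x(x^2)=bc(b-c)k$ to produce $k$ and then $\one=(k-k^2)/2$, and the classification of unital two-dimensional subalgebras from Theorem~\ref{thm:slices}. The one difference is cosmetic. You place $S$ in a single plane via the finite-union-of-subspaces argument and then use injectivity of $L_y$ in a field. The paper instead notes that any nonzero $x$ in a plane satisfies $x^2-Tx+N\one=0$ with $N>0$, so $\one\in S$ element by element.
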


\begin{proof}
Let $S$ be a nonzero subalgebra and choose $0\neq x\in S$.  If $x$ lies in
one of the three complex planes, its complex quadratic equation has the form
$x^2-Tx+N\one=0$ with real $T$ and $N>0$.  Hence $\one\in S$.
Otherwise~\eqref{eq:third-power} gives
$(x^2)x-x(x^2)=bc(b-c)k\in S$, so $k\in S$ and
$\one=(k-k^2)/2\in S$.  Any one-dimensional such subalgebra is $\R\one$;
the two-dimensional ones are classified by Theorem~\ref{thm:slices}.
Finally, direct expansion gives
\[
\det[\one\;x\;x^2]
=bc(b-c).
\]
Thus an element outside the three planes generates a subalgebra containing
three linearly independent vectors, which must be $\B$.
More generally, a set generates $\B$ exactly when it is contained in none
of the three planes: every proper subalgebra in the list is contained in
one of them.
\end{proof}

\begin{corollary}[Power-series exponential on the slices]\label{cor:exponentials}
The powers in the usual exponential series are termwise independent of
parenthesization for exactly the elements in
$\C_i\cup\C_j\cup\C_k$.  On those planes it is
\begin{align*}
\exp(a\one+bi)&=e^a(\cos b\,\one+\sin b\,i),\\
\exp(a\one+bj)&=e^a(\cos b\,\one+\sin b\,j),\\
\exp(a\one+bk)&=
e^{a+b/2}\left(
\cos\frac{\sqrt7 b}{2}\,\one
+\sin\frac{\sqrt7 b}{2}\,\frac{2k-\one}{\sqrt7}
\right).
\end{align*}
For $bc(b-c)\neq0$, the coefficient customarily denoted $x^3/3!$ is already
ambiguous.  This prevents a termwise parenthesization-independent power
series there; it does not prevent defining global exponentials by a specified
recursion.
\end{corollary}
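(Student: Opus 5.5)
The corollary makes three claims: a characterization of where the exponential series is unambiguous, three explicit formulas on the planes, and an ambiguity statement off the planes. I will prove them in that order, relying on Theorem~\ref{thm:slices} for the characterization.

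For the characterization, note first that the series involves the powers $x^n$ for every $n$. Termwise independence of parenthesization therefore means exactly that all powers of $x$ are independent of parenthesization. This is condition (2) of Theorem~\ref{thm:slices}, which is equivalent to (5), that is, $x\in\C_i\cup\C_j\cup\C_k$. The ambiguity statement is the failure of (3) for $bc(b-c)\neq0$. Then $(x^2)x-x(x^2)=bc(b-c)k\neq0$ by \eqref{eq:third-power}, so the two possible cubes differ. Hence the coefficient $x^3/3!$ is not determined, and no termwise parenthesization-independent series is available. The final caveat about recursive definitions needs no proof; it is only a remark on scope.

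For the formulas, I use that each plane is an associative unital subalgebra isomorphic to $\C$ (Theorem~\ref{thm:slices}). On such a plane the series is the ordinary exponential of the corresponding complex number. It also converges in the finite-dimensional real space $\B$, since the partial sums stay inside the closed two-dimensional plane.
\begin{itemize}
\item On $\C_i$ and $\C_j$, the isomorphism sends $i$ or $j$ to $\sqrt{-1}$. Euler's formula then gives $e^a(\cos b\,\one+\sin b\,i)$, and likewise with $j$.
\item On $\C_k$, I rewrite $k=\tfrac12\one+\tfrac{\sqrt7}{2}\ell$ with $\ell=(2k-\one)/\sqrt7$. This uses $\ell^2=-\one$, which follows from \eqref{eq:k-square}. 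Thus $a\one+bk=(a+b/2)\one+\tfrac{\sqrt7 b}{2}\ell$.
\item Since $\one$ is central and the plane is associative and commutative, the exponential splits as $e^{a+b/2}\exp(\tfrac{\sqrt7 b}{2}\ell)$. Euler's formula for $\ell$ then gives the stated expression.
\end{itemize}

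I expect the main obstacle to be the $\C_k$ computation. The step to check there is that $\ell^2=-\one$, which I will verify by expanding $(2k-\one)^2=4k^2-4k+\one=-7\cdot\one$ using \eqref{eq:k-square}.
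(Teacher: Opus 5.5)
Your proposal is correct and follows the paper's proof essentially step for step: Theorem~\ref{thm:slices} gives the characterization, the rewriting $a\one+bk=(a+b/2)\one+(\sqrt7 b/2)\ell$ with $\ell^2=-\one$ gives the third formula, and \eqref{eq:third-power} shows that $x^3$ is ambiguous off the planes. One small correction: convergence on a plane comes from transporting the convergent complex exponential series through the linear isomorphism with $\C$, not from the partial sums lying in a closed subspace, though this does not affect the argument.
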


\begin{proof}
Theorem~\ref{thm:slices} gives both the necessity and the three associative
copies of $\C$.  The first two formulas are the ordinary complex exponential.
For the third, write
\[
a\one+bk=(a+b/2)\one+(\sqrt7 b/2)\ell
\]
and use $\ell^2=-\one$.  Outside the stated union,
$(x^2)x\neq x(x^2)$ by~\eqref{eq:third-power}.
\end{proof}

\begin{remark}[Global recursive exponentials]
The matrix exponentials
\[
E_L(x)=\exp(L_x)\one,\qquad E_R(x)=\exp(R_x)\one
\]
are defined and convergent for every $x\in\B$.  Their terms use, respectively,
$x(x\cdots x)$ and $(\cdots xx)x$.  They agree with the preceding formulas
on each complex slice.  For real $\tau\to0$,
\[
E_R(\tau x)-E_L(\tau x)
=\frac{\tau^3}{6}\assoc{x}{x}{x}+O(\tau^4).
\]
Consequently they differ for sufficiently small nonzero $\tau$ whenever
$bc(b-c)\neq0$.  Different recursions can therefore yield different globally defined
analytic maps. For an explicit convergence bound, put
$\|(a,b,c)\|_1=|a|+|b|+|c|$. The table gives
$\|uv\|_1\leq2\|u\|_1\|v\|_1$. Induction on a binary product tree bounds
every parenthesization $P_n(x)$ of $n\geq1$ copies of $x$ by
$2^{n-1}\|x\|_1^n$. Hence every fixed choice of parenthesizations gives an
absolutely convergent series $\one+\sum_{n\geq1}P_n(x)/n!$, locally
uniformly in $x$. The two matrix definitions above are real analytic by
the convergent matrix exponential series.
\end{remark}

\section{Two-sided inverses and one-sided zero divisors}\label{sec:inverses}

In a nonassociative algebra, an element $y$ is a \emph{two-sided inverse} of
$x$ if $xy=yx=\one$.  This definition alone does not imply cancellation,
because the expressions $(yx)z$ and $y(xz)$ need not agree.

\begin{theorem}\label{thm:inverse}
A nonzero element $x=(a,b,c)$ has a two-sided inverse if and only if
\[
bc(b-c)=0.
\]
Equivalently, the two-sided invertible locus is
\[
(\C_i\cup\C_j\cup\C_k)\setminus\{0\}.
\]
Each such inverse is unique, and the formulas are
\begin{align}
(a\one+bi)^{-1}&=\frac{a\one-bi}{a^2+b^2},\label{eq:inv-i}\\
(a\one+cj)^{-1}&=\frac{a\one-cj}{a^2+c^2},\label{eq:inv-j}\\
(a\one+bk)^{-1}&=
\frac{(a+b)\one-bk}{a^2+ab+2b^2}.\label{eq:inv-k}
\end{align}
\end{theorem}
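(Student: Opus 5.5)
The plan is to solve the two equations $xy=\one$ and $yx=\one$ directly as a linear system in the coordinates of $y$, using the explicit operators in~\eqref{eq:LR}. Write $x=(a,b,c)$ and $y=(d,e,f)$. The condition $xy=\one$ reads $L_x y=\one$, and $yx=\one$ reads $R_x y=\one$. Since $L_x-R_x$ has the simple form
\[
L_x-R_x=\begin{pmatrix}0&0&0\\0&c&-b\\0&c&-b\end{pmatrix},
\]
the two-sided condition is equivalent to $L_x y=\one$ together with $(L_x-R_x)y=0$. The latter is the single equation $ce-bf=0$.

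I will then split into cases according to whether $(b,c)$ vanishes. If $b=c=0$ then $x=a\one$ with $a\neq0$, and $y=a^{-1}\one$ is forced because $L_x=a\,I$. If $(b,c)\neq(0,0)$, the equation $ce=bf$ forces $(e,f)=\mu(b,c)$ for some $\mu\in\R$, so $y=d\one+\mu(bi+cj)$ lies in $\spanR\{\one,x\}$.

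With $y=d\one+\mu(x-a\one)$ in hand, $xy$ is computed using $x^2$. By~\eqref{eq:v-square}, $x^2$ has a component $bc\,k$ added to an element of $\spanR\{\one,x\}$. Expanding $xy=\one$, the coefficient of $k$ must vanish modulo $\spanR\{\one,x\}$. So either $\mu=0$, or $bc\,k\in\spanR\{\one,x\}$, which means $x$ is quadratic, i.e.\ $bc(b-c)=0$ by Theorem~\ref{thm:slices}.

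When $\mu=0$ we get $xy=d\,x=\one$, impossible for nonscalar $x$. Hence outside the three planes no two-sided inverse exists, proving necessity. When $x$ lies in one of $\C_i,\C_j,\C_k$, the equation reduces to inversion in a copy of $\C$. There $\mu$ and $d$ are uniquely determined by the two linear equations from the $\one$- and $x$-coordinates, because the norm form is positive definite. This also gives uniqueness, since every candidate $y$ was shown to lie in $\spanR\{\one,x\}$.

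The explicit formulas~\eqref{eq:inv-i}--\eqref{eq:inv-k} then follow from conjugation in each slice. For $\C_k$ I will use~\eqref{eq:k-square}: with $k^2=k-2\one$, one gets
\[
(a\one+bk)\bigl((a+b)\one-bk\bigr)=(a^2+ab+2b^2)\one.
\]
This product is verified directly, and $a^2+ab+2b^2>0$ for $(a,b)\neq(0,0)$.

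The main obstacle is the step showing that any inverse lies in $\spanR\{\one,x\}$. It hinges on reading off $L_x-R_x$ correctly and on handling the case where one of $b,c$ is zero. In that case the proportionality $(e,f)=\mu(b,c)$ still holds, since $(b,c)\neq0$ makes the single equation $ce=bf$ cut out exactly a line.
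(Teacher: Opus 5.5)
Your proposal is correct and follows essentially the paper's route. Comparing $xy$ with $yx$ gives the single equation $ce=bf$ (the paper's $cq=br$), which for $(b,c)\neq(0,0)$ forces every two-sided inverse into $\spanR\{\one,x\}$, and uniqueness then comes from inversion in the three copies of $\C$. The only difference is in necessity: the paper eliminates coordinates directly under $bc\neq0$ (reaching $c(b-c)q/b=0$) and uses the proportionality only for uniqueness, whereas you use the proportionality for both steps and reduce necessity to the quadratic-element criterion of Theorem~\ref{thm:slices}, a slightly cleaner organization of the same computation.
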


\begin{proof}
Suppose $y=(p,q,r)$ satisfies $xy=yx=\one$.  Equality of the second
coordinates in $xy$ and $yx$, using~\eqref{eq:product}, gives
\begin{equation}\label{eq:cqbr}
cq=br.
\end{equation}
The vanishing of the second and third coordinates of $xy$ gives
\begin{equation}\label{eq:inverse-linear}
(a+c)q+bp=0,\qquad ar+cq+cp=0.
\end{equation}
Assume $bc\neq0$.  From~\eqref{eq:cqbr}, $r=cq/b$, and from the first equation
of~\eqref{eq:inverse-linear}, $p=-(a+c)q/b$.  Substitution in the second gives
\[
\frac{c(b-c)}{b}\,q=0.
\]
If $b\neq c$, then $q=0$, and consequently~\eqref{eq:cqbr} and
\eqref{eq:inverse-linear} give $r=p=0$, contradicting the real-coordinate
equation $ap-bq-cr=1$.  Hence $b=c$.  We have proved the necessary condition
$bc(b-c)=0$.

Conversely, if $c=0$ or $b=0$, direct multiplication verifies
\eqref{eq:inv-i} or~\eqref{eq:inv-j}.  If $b=c$, then $x=a\one+bk$ and
\eqref{eq:k-square} verifies~\eqref{eq:inv-k}.  The three denominators are
strictly positive for nonzero real pairs; in the last case,
\[
a^2+ab+2b^2=(a+b/2)^2+\frac74b^2.
\]
Thus the displayed elements are defined and are two-sided inverses.

For uniqueness, suppose first that $(b,c)\neq(0,0)$.  Equation~\eqref{eq:cqbr}
forces $(q,r)$ to be proportional to $(b,c)$, so any two-sided inverse belongs
to $\spanR\{\one,x\}$.  When $x$ satisfies the necessary condition, this
space is one of the three fields above, in which inverses are unique.
For $x=a\one\neq0$, the equation $xy=\one$ immediately gives
$y=a^{-1}\one$.
\end{proof}

\begin{remark}[Failure of cancellation]
The element $i$ has the two-sided inverse $-i$, but $ij=0$ with $j\neq0$.
There is no contradiction: attempting to deduce $j=(-i)(ij)=0$ would replace
$((-i)i)j$ by $(-i)(ij)$, an invalid reassociation.  Thus ``two-sided
invertible'' and ``not a zero divisor'' are distinct notions here.
\end{remark}

We call $x$ a left annihilator of a nonzero element if
$xy=0$ for some $y\neq0$, and a right annihilator of a nonzero element if
$yx=0$ for some $y\neq0$.  Formula~\eqref{eq:LR} gives the exact algebraic
loci.

\begin{proposition}\label{prop:zero-divisors}
For $x=(a,b,c)$, there exists $0\neq y$ with $xy=0$ if and only if
\begin{equation}\label{eq:delta-L}
\Delta_L(x):=a^3+a^2c+a(b^2+c^2)-bc^2+c^3=0.
\end{equation}
There exists $0\neq y$ with $yx=0$ if and only if
\begin{equation}\label{eq:delta-R}
\Delta_R(x):=a^3+a^2b+a(b^2+c^2)+b^3-b^2c=0.
\end{equation}
\end{proposition}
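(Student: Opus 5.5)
The statement is a finite-dimensional linear-algebra fact, so the plan is to reduce it to determinants of the operator matrices in~\eqref{eq:LR}. There is a nonzero $y$ with $xy=0$ exactly when $L_x$ has nontrivial kernel. Since $\B$ is finite-dimensional over $\R$, this happens exactly when $\det L_x=0$. Likewise, a nonzero $y$ with $yx=0$ exists exactly when $\ker R_x\neq0$, that is, when $\det R_x=0$. So it suffices to show $\det L_{(a,b,c)}=\Delta_L(x)$ and $\det R_{(a,b,c)}=\Delta_R(x)$.

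For $L_x$, I will expand the matrix in~\eqref{eq:LR} along its last column, which has entries $-c,0,a$. This gives
\[
\det L_x=-c\,(bc-c(a+c))+a\bigl(a(a+c)+b^2\bigr)
= a^3+a^2c+ab^2-bc^2+ac^2+c^3,
\]
which is $\Delta_L(x)$ after collecting terms.

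For $R_x$, I will expand along the middle column, whose entries are $-b,a,0$. The two surviving $2\times2$ minors are $b(a+b)-bc$ and $a(a+b)+c^2$, which yields
\[
\det R_x=b\bigl(b(a+b)-bc\bigr)+a\bigl(a(a+b)+c^2\bigr)
=a^3+a^2b+ac^2+ab^2+b^3-b^2c=\Delta_R(x).
\]
As a cross-check I will use the symmetry of the table that swaps $i\leftrightarrow j$ together with reversing products: $ji=i+j$ corresponds to $ij$ read in the opposite algebra. This predicts $\Delta_R(a,b,c)=\Delta_L(a,c,b)$, which matches the displayed formulas.

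There is no real obstacle here. The only care needed is in the sign bookkeeping of the cofactor expansions, and in noting that one-sided annihilation is governed by the kernel, not by the existence of one-sided inverses. The latter distinction matters because of the nonassociativity discussed in the preceding remark.
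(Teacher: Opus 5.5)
Your proposal is correct and follows the paper's proof exactly: both reduce the existence of a nonzero annihilated $y$ to singularity of $L_x$ or $R_x$, and both identify $\det L_x=\Delta_L(x)$ and $\det R_x=\Delta_R(x)$ from~\eqref{eq:LR}. Your cofactor expansions are correct. Your cross-check $\Delta_R(a,b,c)=\Delta_L(a,c,b)$ is the determinant form of the identity $R_x=PL_{Px}P$, which the paper uses in Proposition~\ref{prop:one-sided}.
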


\begin{proof}
The equation $xy=0$ has a nonzero solution exactly when $L_x$ is singular,
and $yx=0$ has a nonzero solution exactly when $R_x$ is singular.  Expanding
the determinants in~\eqref{eq:LR} gives
$\det L_x=\Delta_L(x)$ and $\det R_x=\Delta_R(x)$.
\end{proof}

\begin{remark}[Distinct left and right inverses]
Nonsingularity of both multiplication operators need not give a common
two-sided inverse.  For $x=\one+i+2j$, one has
$\Delta_L(x)=12$ and $\Delta_R(x)=6$.  The unique solutions of $xy=\one$
and $zx=\one$ are, respectively,
\[
y=\tfrac14\one-\tfrac1{12}i-\tfrac13j,
\qquad z=\tfrac13\one-\tfrac13j.
\]
They are different, as also follows from Theorem~\ref{thm:inverse}.
\end{remark}

\begin{proposition}[Complete one-sided inverse criteria]\label{prop:one-sided}
Let $x=(a,b,c)\neq0$. A right inverse, meaning a solution of $xy=\one$,
exists exactly when $\Delta_L(x)\neq0$ or
\[
x\in\R i\cup\R k\cup\R(\one-j).
\]
If $\Delta_L(x)\neq0$, it is unique and equals
\[
\frac{a(a+c)\one-ab i+c(b-a-c)j}{\Delta_L(x)}.
\]
A left inverse, meaning a solution of $yx=\one$, exists exactly when
$\Delta_R(x)\neq0$ or
\[
x\in\R j\cup\R k\cup\R(\one-i).
\]
If $\Delta_R(x)\neq0$, it is unique and equals
\[
\frac{a(a+b)\one+b(c-a-b)i-acj}{\Delta_R(x)}.
\]
In each singular case where an inverse exists, all inverses on that side
form an affine line. Left cancellation by $x$ holds exactly when
$\Delta_L(x)\neq0$, and right cancellation exactly when $\Delta_R(x)\neq0$.
\end{proposition}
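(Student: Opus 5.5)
The plan is to treat $xy=\one$ as the linear system $L_x y=e_1$, where $e_1=(1,0,0)^{\mathsf T}$ and $L_x$ is the matrix in~\eqref{eq:LR}. The left-inverse case $yx=\one$ is the same system with $R_x$ in place of $L_x$.

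\emph{Nonsingular case.} If $\Delta_L(x)=\det L_x\neq0$ (Proposition~\ref{prop:zero-divisors}), Cramer's rule gives a unique solution. Its coordinates are the first column of $\operatorname{adj}(L_x)$ divided by $\Delta_L(x)$, namely the cofactors of the first-row entries. One computes these as $a(a+c)$, $-ab$ and $c(b-a-c)$ (up to the sign conventions, to be checked), which is the displayed formula. The same computation for $R_x$ gives the left-inverse formula.

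\emph{Singular case.} Here a solution exists iff $e_1\in\operatorname{im}L_x$. Equivalently, every left null vector $w$ of $L_x$, i.e.\ every $w$ with $w^{\mathsf T}L_x=0$, has $w_1=0$. Suppose such a $w=(0,w_2,w_3)\neq0$ exists. Then the last two rows of $L_x$, which are $(b,a+c,0)$ and $(c,c,a)$, are linearly dependent, so all $2\times2$ minors of that $2\times3$ block vanish:
\[
b c-c(a+c)=0,\qquad ab=0,\qquad a(a+c)=0 .
\]
Solving this is the main obstacle, since one has to keep track of which minors control existence.

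If $a=0$, the conditions reduce to $c(b-c)=0$, giving $c=0$ or $b=c$. These are exactly $x\in\R i$ or $x\in\R k$.

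If $a\neq0$, then $b=0$ and $c=-a$, so $x=a(\one-j)$.

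Conversely, one checks directly that each of these $x$ has $\Delta_L(x)=0$. Since the first row of $L_x$, namely $(a,-b,-c)$, is nonzero for $x\neq0$, in each case $\operatorname{rank}L_x=2$, so the lower block has rank $1$. This matters because existence requires the rows of $L_x$ to be dependent \emph{only} through the lower two rows: if instead the left null vector had $w_1\neq0$, the system would be inconsistent. Concretely, the lower block having rank $1$ means the rows $(b,a+c,0)$ and $(c,c,a)$ are dependent, so there is a left null vector $w$ with $w_1=0$. Because $\operatorname{rank}L_x=2$, the left null space is one-dimensional, so every left null vector has $w_1=0$ and the system is consistent.

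In every other singular case, the lower two rows are independent. Then the left null vector must have $w_1\neq0$, and no solution exists. This proves the criterion. Since $\ker L_x$ is one-dimensional in the consistent cases, the solution set is an affine line. Explicit particular solutions confirm consistency:
\begin{itemize}
\item for $x=i$, take $y=-i$;
\item for $x=k$, take $y=\tfrac12(\one-k)$, using~\eqref{eq:k-square};
\item for $x=\one-j$, take $y=\tfrac12(\one+j)$, verified with the table.
\end{itemize}
Scaling handles nonzero multiples of these elements.

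The left-inverse statement follows the same way from $R_x$, whose lower rows are $(b,a,b)$ and $(c,0,a+b)$. Alternatively, it follows from the symmetry $i\leftrightarrow j$ combined with opposite multiplication, which swaps $L$ and $R$. Under this symmetry the sets $\R i\cup\R k\cup\R(\one-j)$ and $\R j\cup\R k\cup\R(\one-i)$ correspond.

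Finally, left cancellation $xy=xy'\Rightarrow y=y'$ means that $L_x$ is injective, i.e.\ $\Delta_L(x)\neq0$. Likewise, right cancellation is equivalent to $\Delta_R(x)\neq0$.
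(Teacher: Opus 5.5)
This is essentially the paper's proof. Your three vanishing minors are, up to sign, the components of the cross product $v=(a(a+c),-ab,c(b-a-c))$ of the last two rows of $L_x$. Your left-null-vector test is the dual of the paper's observation that a solution would have to lie in $\ker B_x=\R v$, so its first coordinate would be a multiple of $\Delta_L(x)=0$. The adjugate formula, the passage to left inverses through the antiautomorphism $*$, and cancellation as injectivity also match the paper.

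One inference needs repair. A nonzero first row only gives $\operatorname{rank}L_x\geq1$, not $\operatorname{rank}L_x=2$. On the three lines you should instead check directly, as the paper does by substitution, that the lower block is nonzero and that the first row is not proportional to its nonzero row:
\begin{itemize}
\item for $x=i$, $(0,-1,0)$ against $(1,0,0)$;
\item for $x=k$, $(0,-1,-1)$ against $(1,1,0)$;
\item for $x=\one-j$, $(1,0,1)$ against $(-1,-1,1)$.
\end{itemize}
This matters because your affine-line conclusion rests on $\dim\ker L_x=1$. Existence itself is already secured by your explicit particular solutions.
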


\begin{proof}
The regular formulas are the first columns of the adjugate matrices divided
by their determinants. For the singular right-inverse problem, the last
two rows of $L_x$ form
\[
B_x=\begin{pmatrix}b&a+c&0\\c&c&a\end{pmatrix}.
\]
Their cross product is $v=(a(a+c),-ab,c(b-a-c))$.
If $v\neq0$, then $\ker B_x=\R v$. Since the first row of $L_x$ paired
with $v$ is $\Delta_L(x)=0$, no vector in this kernel can satisfy the
first-coordinate equation $ap-bq-cr=1$.
If $v=0$ and $a\neq0$, then $b=0,c=-a$.
If $v=0$ and $a=0$, then $c(b-c)=0$, so $c=0$ or $b=c$.
These are precisely the three displayed lines. Each lies in an associative
complex slice, so Theorem~\ref{thm:inverse} supplies a solution $y_0$.
On each nonzero line $L_x$ has rank two: this follows immediately by
substituting $i,k,\one-j$ into~\eqref{eq:LR}. Thus all solutions are
$y_0+\ker L_x$, an affine line.

For left inverses, let $P(a,b,c)=(a,c,b)$. The coordinate matrices give
$R_x=P L_{Px}P$. This identity also follows from the antiautomorphism
in Proposition~\ref{prop:anti-involution}: $P((Px)(Py))=yx$. Applying the preceding argument to $Px$ and then applying
$P$ gives the left-inverse assertions. Finally, cancellation is injectivity
of the corresponding multiplication operator, equivalent in finite
dimension to a nonzero determinant.
\end{proof}

The affine solution sets from Proposition~\ref{prop:one-sided} are made
explicit in Table~\ref{tab:inverse-lines}. For each row, multiplication
checks the particular inverse and the displayed kernel direction.
Exhaustivity follows from the rank-two calculation in the singular cases
and from nonsingularity of the corresponding multiplication operator in
the remaining cases.
For $x=\rho u$, $\rho\neq0$, divide the particular solution for $u$ by
$\rho$ and retain its kernel direction. In particular, $i$ has a unique
two-sided inverse $-i$, although its right inverses form a whole affine line.

\begin{table}[ht]
\caption{All inverses for representatives of the singular lines.
Each occurrence of $\lambda$ ranges independently over $\R$.}
\label{tab:inverse-lines}
\centering
\begin{tabular}{@{}ccc@{}}
\toprule
$x$ & Right inverses: $xy=\one$ & Left inverses: $yx=\one$ \\
\midrule
$i$ & $-i+\lambda j$ & $-i$ \\
$j$ & $-j$ & $-j+\lambda i$ \\
$k$ & $(\one-k)/2+\lambda(\one-i+j)$ &
$(\one-k)/2+\lambda(\one+i-j)$ \\
$\one-j$ & $(\one+j)/2+\lambda(-\one+2i+j)$ & $(\one+j)/2$ \\
$\one-i$ & $(\one+i)/2$ & $(\one+i)/2+\lambda(-\one+i+2j)$ \\
\bottomrule
\end{tabular}
\end{table}

These failures have minimal dimension among real unital algebras.
Indeed, a unital algebra of dimension two has a basis $\{\one,v\}$ and
$v^2=\alpha\one+\beta v$, so its table is that of the associative algebra
$\R[X]/(X^2-\beta X-\alpha)$. Dimension one is also associative.
In an associative algebra, if $xy=zx=\one$, then
$z=z(xy)=(zx)y=y$; multiplication by a two-sided inverse also gives
cancellation. Hence neither the failures of inversion and cancellation
above nor failure of power-associativity can occur in dimension at most two.

\section{Polynomial equations of degree two}

For $x=(a,b,c)$, direct multiplication gives
\begin{equation}\label{eq:square}
x^2=\bigl(a^2-b^2-c^2,\;b(2a+c),\;c(2a+b)\bigr).
\end{equation}

\begin{proposition}[Idempotents and square-zero elements]\label{prop:quadratic}
The only idempotents of $\B$ are $0$ and $\one$.  The only solution of
$x^2=0$ is $x=0$.
\end{proposition}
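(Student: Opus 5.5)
The plan is to reduce both claims to the three complex slices, using Theorem~\ref{thm:slices}, and then to use the fact that $\C$ has no nontrivial idempotents or nilpotents. Equation~\eqref{eq:square} gives the coordinates of $x^2$ for $x=(a,b,c)$. In particular, $x^2$ is always a combination of $\one$ and $x$ whenever $x^2=x$ or $x^2=0$.

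\emph{Idempotents.} Suppose $x^2=x$. Then $x$ is quadratic in the sense of Section~4. By the equivalence $(5)\Leftrightarrow(6)$ of Theorem~\ref{thm:slices}, $x$ lies in one of $\C_i,\C_j,\C_k$. Each of these is a unital subalgebra isomorphic to $\C$, with the unit of $\B$ corresponding to $1\in\C$. The only idempotents of the field $\C$ are $0$ and $1$, so $x\in\{0,\one\}$.

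\emph{Square-zero elements.} Suppose $x^2=0$. Then $x$ is again quadratic, so it lies in a copy of $\C$. A field has no nonzero nilpotents, so $x=0$.

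As a cross-check independent of the slice theorem, I would also read off the coordinates of~\eqref{eq:square}. For $x^2=0$, the equations are $b(2a+c)=0$, $c(2a+b)=0$ and $a^2=b^2+c^2$.
\begin{itemize}
\item If $b=0$, then either $c=0$, forcing $a=0$, or $2a=0$, which gives $0=c^2$. In both cases $x=0$.
\item The case $c=0$ is symmetric.
\item If $bc\neq0$, then $c=-2a$ and $b=-2a$, so $a^2=8a^2$. This forces $a=0$, hence $b=c=0$, a contradiction.
\end{itemize}
The idempotent case is handled similarly, with $b(2a+c)=b$ and $c(2a+b)=c$.

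There is no real obstacle here. The only point to check is that the slice isomorphisms send $\one$ to $1$, which is immediate from the proof of Theorem~\ref{thm:slices}. That check is what guarantees the idempotent $1\in\C$ corresponds to $\one$ and not to some other element.
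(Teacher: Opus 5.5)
Your proof is correct, but it takes a different route from the paper's. The paper works directly with the coordinates of $x^2$ from \eqref{eq:square}. It splits into the cases $b=c=0$, exactly one of $b,c$ nonzero, and $bc\neq0$. In the last case it reduces the idempotent equation to $7a^2-7a+2=0$, with discriminant $-7$, and the square-zero equation to $-7a^2=0$. Your cross-check for $x^2=0$ is essentially that argument.

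Your main route is structural instead. You observe that $x^2=x$ and $x^2=0$ both make $x$ quadratic, use $(5)\Leftrightarrow(6)$ of Theorem~\ref{thm:slices} to place $x$ in a copy of $\C$, and then use that a field has no nontrivial idempotents or nonzero nilpotents. This is shorter and shows why the result holds: the proposition amounts to the fact that every quadratic plane of $\B$ is a field. That is also exactly what fails in $\B_t$ for $t^2\geq8$ (Proposition~\ref{prop:exceptional}). There the plane of $k$ becomes $\R[\epsilon]/(\epsilon^2)$ or $\R\times\R$, and square-zero elements or nontrivial idempotents appear.

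The paper's computation, by contrast, does not depend on the slice classification (although that classification rests on the same formula for $v^2$). It also produces explicit polynomial equations whose behaviour under a change of parameter can be read off mechanically.

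Finally, the point you flag about $\one\mapsto1$ needs no inspection of the proof of Theorem~\ref{thm:slices}. Any isomorphism of unital algebras carries the unique two-sided identity to the two-sided identity.
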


\begin{proof}
Suppose first that $x^2=x$.  Equation~\eqref{eq:square} is equivalent to
\begin{equation}\label{eq:idempotent-system}
a^2-b^2-c^2=a,\qquad b(2a+c-1)=0,\qquad c(2a+b-1)=0.
\end{equation}
If $b=c=0$, then $a^2=a$, giving $x=0$ or $x=\one$.  If exactly one of
$b,c$ is nonzero, then $a=1/2$ and the first equation would require the square
of that nonzero coordinate to be $-1/4$, impossible over $\R$.  If $bc\neq0$,
the last two equations give $b=c=1-2a$.  The first then becomes
$7a^2-7a+2=0$, whose discriminant is $-7$.  Thus there are no further real
idempotents.

Now suppose $x^2=0$.  If $b=c=0$, then $a=0$.  If exactly one of $b,c$ is
nonzero, the last two coordinates of~\eqref{eq:square} force $a=0$, while the
first coordinate is the negative of a nonzero square.  If $bc\neq0$, the last
two coordinates give $b=c=-2a$; the first becomes $-7a^2=0$, again forcing
$x=0$.  Hence no nonzero square-zero element exists.
\end{proof}

\begin{proposition}[Square roots of $-\one$]\label{prop:roots-minus-one}
The equation $x^2=-\one$ has exactly six real solutions:
\begin{equation}\label{eq:roots-minus-one}
\mathcal S=\left\{\pm i,\ \pm j,\
\ \pm\frac{\one-2i-2j}{\sqrt7}\right\}.
\end{equation}
\end{proposition}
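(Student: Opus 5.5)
The plan is to solve $x^2=-\one$ directly from the squaring formula~\eqref{eq:square}, splitting into cases exactly as in the proof of Proposition~\ref{prop:quadratic}. For $x=(a,b,c)$ the equation is equivalent to the system
\[
a^2-b^2-c^2=-1,\qquad b(2a+c)=0,\qquad c(2a+b)=0.
\]
I will organize the cases by which of $b,c$ vanish.

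If $b=c=0$, the first equation reads $a^2=-1$, which has no real solution. Suppose exactly one of $b,c$ is nonzero, say $b\neq0$ and $c=0$. The second equation then forces $a=0$, and the first gives $b^2=1$, so $x=\pm i$. The case $c\neq0$, $b=0$ is symmetric and gives $x=\pm j$.

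If $bc\neq0$, the last two equations give $c=-2a$ and $b=-2a$. Hence $b=c=-2a$ with $a\neq0$. Substituting into the first equation gives $a^2-8a^2=-1$, so $a=\pm1/\sqrt7$. This yields $x=\pm(\one-2i-2j)/\sqrt7$.

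As a consistency check, these last two roots are $\mp\ell$, where $\ell=(2k-\one)/\sqrt7$ from the proof of Theorem~\ref{thm:slices}. So all six solutions lie in the three slices, one conjugate pair $\pm$ in each copy of $\C$. This agrees with Theorem~\ref{thm:slices}: any solution is quadratic and therefore lies in $\C_i\cup\C_j\cup\C_k$. One could alternatively use that observation as the organizing principle and solve $x^2=-\one$ inside each copy of $\C$.

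There is no real obstacle here. The only care needed is to treat the three coordinate cases exhaustively and to confirm that each listed element actually squares to $-\one$, which is immediate from the case analysis (or from $\ell^2=-\one$).
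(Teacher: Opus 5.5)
Your proposal is correct and takes the same approach as the paper's proof. Both reduce $x^2=-\one$ via \eqref{eq:square} to the coordinate system and split by which of $b,c$ vanish, obtaining $\pm i$ and $\pm j$, and then the last pair from $b=c=-2a$ and $-7a^2=-1$.
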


\begin{proof}
By~\eqref{eq:square}, the equation is
\begin{equation}\label{eq:root-system}
a^2-b^2-c^2=-1,\qquad b(2a+c)=0,\qquad c(2a+b)=0.
\end{equation}
If $c=0$, then $b\neq0$, $a=0$, and $b=\pm1$, giving $\pm i$.  The case
$b=0$ similarly gives $\pm j$.  If $bc\neq0$, then $b=c=-2a$, and the first
equation becomes $-7a^2=-1$.  This gives the remaining two elements in
\eqref{eq:roots-minus-one}.  The case $b=c=0$ has no real solution.
\end{proof}

\section{Derivations, automorphisms, and involutions}

\begin{theorem}\label{thm:symmetries}
For every field extension $F/\R$, put $\B_F=\B\otimes_{\R}F$.
Its $F$-linear derivations and automorphisms satisfy
\[
\Der_F(\B_F)=0,\qquad \Aut_F(\B_F)=\{\mathrm{id}_{\B_F}\}.
\]
In particular, these conclusions hold over both $\R$ and $\C$.
\end{theorem}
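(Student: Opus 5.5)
The plan is to work over an arbitrary field $F\supseteq\R$ and use only the structure constants of~\eqref{eq:table}, which are rational. In both cases the unknowns are the images of the generators $i,j$.

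\textbf{Derivations.} Let $D$ be an $F$-linear derivation. First, $D(\one)=D(\one\one)=2D(\one)$ gives $D(\one)=0$. Write
\[
D(i)=\alpha_1\one+\beta_1 i+\gamma_1 j,\qquad D(j)=\alpha_2\one+\beta_2 i+\gamma_2 j .
\]
I would then apply $D$ to the four defining relations $i^2=-\one$, $j^2=-\one$, $ij=0$, $ji=i+j$. This produces a homogeneous linear system in the six unknowns, with integer coefficients.
\begin{itemize}
\item From $D(i)i+iD(i)=0$ and the table one gets $D(i)i=\alpha_1 i-\beta_1\one$ and $iD(i)=\alpha_1 i-\beta_1\one+\gamma_1\cdot 0$. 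Hence $2\alpha_1=0$ and $\beta_1=0$.
\item The relation for $j$ similarly forces $\alpha_2=\gamma_2=0$, together with a relation coming from the $ij$ and $ji$ terms that I will compute carefully.
\item Then $D(i)j+iD(j)=0$ and $D(j)i+jD(i)=D(i)+D(j)$ should force $\gamma_1=\beta_2=0$.
\end{itemize}
The system has integer coefficients, so its solution space has the same dimension over every $F$. It therefore suffices to show that the coefficient matrix has rank $6$, and this can be read off over $\mathbb Q$.

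\textbf{Automorphisms over $\R$.} Here I would use the earlier structure results.
\begin{itemize}
\item An automorphism $\phi$ fixes $\one$.
\item It permutes the solutions of $x^2=-\one$, which by Proposition~\ref{prop:roots-minus-one} are the six elements of $\mathcal S$.
\item It permutes the two-dimensional unital subalgebras $\C_i,\C_j,\C_k$ of Theorem~\ref{thm:slices}.
\end{itemize}
The relations $ij=0$ and $ji=i+j\neq0$ distinguish products. For $u,v$ among $\pm i,\pm j,\pm\ell$ with $u,v$ in different planes, one checks that $uv=0$ happens exactly for $(u,v)=(\pm i,\pm j)$. This forces $\phi(i)=\pm i$ and $\phi(j)=\pm j$, with $\phi(i+j)=\phi(j)\phi(i)$ fixing the signs to $+$.

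\textbf{Automorphisms over general $F$.} That argument uses reality, since over $\C$ there are more square roots of $-\one$. So for general $F$ I would argue directly. Set $\phi(i)=u=(a,b,c)$ and $\phi(j)=v=(d,e,f)$, and impose
\[
u^2=v^2=-\one,\qquad uv=0,\qquad vu=u+v .
\]
The key tool is the associator identity~\eqref{eq:third-power} transported by $\phi$. Since $u$ and $v$ satisfy $u^2=-\one$, we have $(u,u,u)=0$. Hence $bc(b-c)=0$, and likewise $ef(e-f)=0$.
\begin{itemize}
\item Using~\eqref{eq:square}, $u^2=-\one$ then gives a finite list of possibilities for $u$ and for $v$: the points $\pm i$, $\pm j$, $\pm\ell'$ with $\ell'=(\one-2k)/\sqrt{-7}\cdot$(appropriate scalar), plus, over fields with $\sqrt{-1}$, possibly $a$-components from $a^2=-1$ when $b=c=0$.
\item It then remains to test $uv=0$ on the finite list.
\end{itemize}
Alternatively, and more cleanly, $\phi$ must preserve the nuclei and the derived invariants, so the idempotent $(\one+\iota i)/2$ type elements need handling. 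I expect the main obstacle to be this case analysis over a general $F$, where $-1$ or $-7$ may be squares. To bypass it I would try first to show that $uv=0$ with $u^2=v^2=-\one$ already pins $u$ and $v$ via the kernel of $L_u$, using $\Delta_L$ from Proposition~\ref{prop:zero-divisors}. Finally, $\phi=\exp$-type arguments are unavailable in general, so the finite computation is genuinely needed.
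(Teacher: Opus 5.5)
Your proof works, and for automorphisms it takes a different route from the paper. The paper never enumerates square roots of $-\one$. From $L_{\varphi(x)}=\varphi L_x\varphi^{-1}$ and $R_{\varphi(x)}=\varphi R_x\varphi^{-1}$ it gets invariance of $\operatorname{tr}L_{(a,b,c)}=3a+c$ and $\operatorname{tr}R_{(a,b,c)}=3a+b$. This confines $\varphi(i)$ to $(a,1-3a,-3a)$ and $\varphi(j)$ to $(d,-3d,1-3d)$, and two coordinates of $\varphi(i)^2=\varphi(j)^2=-\one$ then force $a=d=0$. That argument is uniform in $F$, needs no knowledge of which scalars are squares in $F$, and does not even use $ij=0$ or $ji=i+j$.

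Your argument runs as follows: an automorphism permutes the solutions of $x^2=-\one$ and preserves zero products, the only zero-product pairs of roots are $(\pm i,\pm j)$, and $ji=i+j$ then fixes both signs. It needs the complete root set over $F$ and a small multiplication check. In exchange it is more structural, and it is the same mechanism the paper itself uses in Theorem~\ref{thm:deformations} to show $\B_s\cong\B_t$ only when $s^2=t^2$. For derivations your plan is essentially the paper's.

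There are two loose ends.

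\textbf{Derivations.} Your expansion of $D(i)i$ omits the term $\gamma_1\,ji=\gamma_1(i+j)$. Correctly,
$D(i)i+iD(i)=-2\beta_1\one+(2\alpha_1+\gamma_1)i+\gamma_1 j$ and
$D(j)j+jD(j)=-2\gamma_2\one+\beta_2 i+(2\alpha_2+\beta_2)j$.
So the relations $i^2=j^2=-\one$ alone give $D=0$ over any $F$. The mixed relations and the rank-over-$\mathbb Q$ argument are superfluous, though harmless.

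\textbf{Automorphisms over general $F$.} The obstacle you anticipate is not there. The equations $a^2-b^2-c^2=-1$, $b(2a+c)=0$, $c(2a+b)=0$ give $\pm i,\pm j$ on the axes. For $bc\neq0$ they give $b=c=-2a$ with $7a^2=1$, whose roots $\pm1/\sqrt7$ are already real; the scalar is $\sqrt7$, not $\sqrt{-7}$. So these are the real roots $\pm(\one-2k)/\sqrt7$.

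The only new solutions are $\pm\iota\one$, when some $\iota\in F$ has $\iota^2=-1$. Neither can be $\varphi(i)$ or $\varphi(j)$, since $\varphi$ is injective and $\varphi(\iota\one)=\iota\one$. Your real zero-product check then applies verbatim. The detours through nuclei, the idempotents $(\one+\iota i)/2$, and $\Delta_L$ are unnecessary.
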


\begin{proof}
Work over $F$ and let $D\in\Der_F(\B_F)$.  Since $D(\one)=D(\one^2)=2D(\one)$, one has
$D(\one)=0$.  Write $D(i)=a\one+bi+cj$.  Applying $D$ to $i^2=-\one$ gives
\[
0=D(i)i+iD(i)=-2b\one+(2a+c)i+cj.
\]
Thus $a=b=c=0$ and $D(i)=0$.  Similarly, if
$D(j)=d\one+ei+fj$, then $D(j^2)=0$ gives
\[
0=D(j)j+jD(j)=-2f\one+ei+(2d+e)j,
\]
so $D(j)=0$.  Therefore $D=0$.

Now let $\varphi\in\Aut_F(\B_F)$. It fixes $\one$, since it sends a
two-sided identity to a two-sided identity. Multiplicativity gives
\[
L_{\varphi(x)}=\varphi L_x\varphi^{-1},\qquad
R_{\varphi(x)}=\varphi R_x\varphi^{-1}.
\]
Thus both traces are invariant. By~\eqref{eq:LR},
\[
\operatorname{tr}L_{(a,b,c)}=3a+c,\qquad
\operatorname{tr}R_{(a,b,c)}=3a+b.
\]
Writing $p=\varphi(i)$ and $q=\varphi(j)$, their traces therefore force
\[
p=(a,1-3a,-3a),\qquad q=(d,-3d,1-3d).
\]
The $i$- and $j$-coordinates of $p^2=-\one$ are respectively
$3a^2-a=0$ and $3a^2-3a=0$. Subtracting gives $2a=0$, hence $a=0$.
For $q^2=-\one$ the two coordinates give $3d^2-3d=0$ and
$3d^2-d=0$, whence $d=0$. Consequently $p=i$ and $q=j$, so
$\varphi$ fixes a basis. This argument uses no ordering or positivity in $F$.
\end{proof}

\begin{proposition}[Antiinvolution]\label{prop:anti-involution}
The linear map
\begin{equation}\label{eq:star}
(a,b,c)^*:=(a,c,b)
\end{equation}
extends, for every $F/\R$, to the unique $F$-linear antiautomorphism of
$\B_F$, and it is involutive:
\[
(x^*)^*=x,\qquad (xy)^*=y^*x^*.
\]
Over $\R$, its fixed and skew subspaces are
\[
\mathcal H=\spanR\{\one,i+j\},\qquad
\mathcal S_-=\spanR\{i-j\}.
\]
Over $F$ these are the corresponding $F$-spans.
No $\B_F$ admits an $F$-linear involution making it structurable;
in particular, the complexification admits no complex-linear structurable
involution.
\end{proposition}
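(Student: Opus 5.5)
The plan has four steps: verify the antiautomorphism on generators, reduce uniqueness to Theorem~\ref{thm:symmetries}, read off the eigenspaces, and then exhibit one explicit failure of a structurable identity.

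\emph{Antiautomorphism and involutivity.} The map $*$ is $F$-linear and swaps $i$ and $j$, so $(x^*)^*=x$ is immediate. By bilinearity, $(xy)^*=y^*x^*$ needs checking only on the basis $\{\one,i,j\}$, using the table~\eqref{eq:table}. The unit cases and $i^2=j^2=-\one$ are trivial. The two nontrivial cases are
\[
(ij)^*=0=ij=j^*i^*,\qquad (ji)^*=(i+j)^*=i+j=ji=i^*j^*.
\]
Equivalently, this is the matrix identity $R_x=PL_{Px}P$ already used in Proposition~\ref{prop:one-sided}.

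\emph{Uniqueness.} Let $\psi$ be any $F$-linear antiautomorphism of $\B_F$. Then $\psi\circ *$ is an $F$-linear automorphism, since the two order reversals cancel. Theorem~\ref{thm:symmetries}, valid over every $F/\R$, forces $\psi\circ *=\mathrm{id}$, hence $\psi=*$.

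\emph{Eigenspaces.} The fixed and skew subspaces are read off from $(a,b,c)^*=(a,c,b)$. Fixed vectors have $b=c$, giving $\spanR\{\one,i+j\}$. Skew vectors have $a=0$ and $c=-b$, giving $\spanR\{i-j\}$. The same computation works verbatim over $F$.

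\emph{Nonstructurability.} Any involution of $\B_F$ is in particular an antiautomorphism, so by uniqueness it must be $*$. It therefore suffices to show that $(\B_F,*)$ fails one identity that every structurable algebra satisfies. I would first try skew-alternativity, which Allison proved holds in every structurable algebra: $(s,x,y)=-(x,s,y)$ for all skew $s$. Equivalently, $(s,s,x)=0$ and $(x,s,s)=0$ for all skew $s$.

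Take $s=i-j$, i.e.\ $(0,1,-1)$. I would evaluate $(s,s,x)$ for $x=i$ or $x=j$ using the associator formula~\eqref{eq:associator}, looking for a nonzero vector with rational (indeed integer) coordinates. Because the coordinates are rational, the nonvanishing persists in every extension field $F$ of $\R$, which gives the statement for all $F$ simultaneously.

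The main obstacle is this final step. The particular skew-alternative instance might happen to vanish, since $\Theta=(s,s,x)$ lies in a constrained space. If so, I would fall back in two stages. First, I would try the other instances $(x,s,s)$ and $(s,x,y)+(x,s,y)$ on basis elements. If those also vanish, I would verify Allison's defining identity
\[
[V_{x,y},V_{z,w}]=V_{\{x,y,z\},w}-V_{z,\{y,x,w\}},\qquad
V_{x,y}z=\{x,y,z\}=(x\bar y)z+(z\bar y)x-(z\bar x)y,
\]
directly on basis vectors from $\{\one,i,j\}$. Here $\bar y=y^*$ and $x,y,z,w$ are arbitrary, so this identity carries far more constraints than skew-alternativity. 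I would exhibit one choice of $x,y,z,w$ for which the two operators differ on some basis vector; again the entries are rational, so the failure holds over every $F$.
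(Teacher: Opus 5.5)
Your proposal is correct. Its first three steps (checking $*$ on the basis, deducing uniqueness from $\Aut_F(\B_F)=\{\mathrm{id}\}$, and reading off the fixed and skew subspaces) are the paper's argument.

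The difference lies in the nonstructurability step. The paper tests the identity $T_zV_{x,y}-V_{x,y}T_z=V_{T_z(x),y}-V_{x,T_{z^*}(y)}$ at $x=\one$, $y=z=t=i$, and finds $-\one+i+4j\neq\one-2i-j$. That identity is the specialization $u=z$, $v=\one$ of Allison's defining identity, so its necessity requires only $V_{z,\one}=T_z$ and $V_{\one,z}=T_{z^*}$.

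You test skew-alternativity instead, and your first attempt already succeeds, so the fallbacks are unnecessary. With $s=i-j$ one has $s^2=-2\cdot\one-i-j$, $(s^2)i=\one-3i-j$, $si=-\one-i-j$, and $s(si)=2j$. Hence $\assoc{s}{s}{i}=\one-3i-3j\neq0$, and similarly $\assoc{s}{s}{j}=\one-i-j$. The coefficients are integers, so the failure persists in every $\B_F$.

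Your route is computationally much lighter: one associator instead of a composite operator identity. Its necessity, however, rests on Allison's theorem that structurable algebras are skew-alternative. Alternatively, it rests on the standard two-identity characterization, where skew-alternativity sits alongside the identity the paper tests. The paper's witness, by contrast, is a direct instance of the definition.

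One small correction. From $\assoc{s}{x}{y}=-\assoc{x}{s}{y}$ for skew $s$ you do get $\assoc{s}{s}{x}=0$, by setting $x=s$ in characteristic $0$. The converse is not automatic, since linearizing $\assoc{s}{s}{x}=0$ only gives antisymmetry when both arguments are skew. So ``equivalently'' should read ``in particular.'' Only this direction is used, so the argument is unaffected.
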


\begin{proof}
The identity $(x^*)^*=x$ is immediate from~\eqref{eq:star}.  Since the product is
bilinear, it suffices to verify $(uv)^*=v^*u^*$ for
$u,v\in\{\one,i,j\}$.  The products involving $\one$ are automatic, and the
four remaining checks follow from
\[
(i^2)^*=-\one=j^2,\quad (j^2)^*=-\one=i^2,\quad
(ij)^*=0=ij,\quad (ji)^*=i+j=ji.
\]
The descriptions of $\mathcal H$ and $\mathcal S_-$ follow by solving
$x^*=x$ and $x^*=-x$.
If $\sigma$ is any other $F$-linear antiautomorphism, then $*\circ\sigma$
is an automorphism.  Theorem~\ref{thm:symmetries} gives $*\circ\sigma=\mathrm{id}$,
and hence $\sigma=*$. Thus the failure below rules out every $F$-linear
structurable involution on $\B_F$.

For the last assertion, recall that for a unital algebra with involution one
sets
\[
V_{x,y}(t)=(xy^*)t+(ty^*)x-(tx^*)y,\qquad
T_z(t)=zt+tz-tz^*.
\]
A necessary identity for structurability
\cite[Definition 2, equation (1)]{AbdurasulovEraliyevaKaygorodov2026} is
\begin{equation}\label{eq:structurable}
T_zV_{x,y}-V_{x,y}T_z
=V_{T_z(x),y}-V_{x,T_{z^*}(y)}.
\end{equation}
It is the specialization $u=z$, $v=\one$ of the usual identity
\cite{Allison1978}
\[
[V_{u,v},V_{x,y}]=V_{V_{u,v}(x),y}-V_{x,V_{v,u}(y)},
\]
where the bracket denotes the commutator of operators. Indeed,
$V_{z,\one}=T_z$ and $V_{\one,z}=T_{z^*}$ directly from the definitions.
Failure of this specialization suffices for the conclusion.
Evaluate both sides of~\eqref{eq:structurable} at $t=i$ with
$x=\one$, $y=i$, and $z=i$.  Using only
\eqref{eq:table}, the left-hand side is $-\one+i+4j$, whereas the right-hand
side is $\one-2i-j$.  Their difference is $-2\cdot\one+3i+5j\neq0$, so
\eqref{eq:structurable} fails over every $F/\R$.
\end{proof}

\begin{remark}[Euclidean length is not a composition norm]\label{rem:norm}
The positive-definite quadratic form
\[
Q(a,b,c)=a^2+b^2+c^2
\]
is the usual Euclidean squared length on the underlying vector space, but it
is not multiplicative: $Q(i)=Q(j)=1$ and $Q(ij)=0$.
In fact, $ij=0$ rules out every positive-definite multiplicative quadratic
form, since its values at both $i$ and $j$ would be positive.
Likewise, the reflection
$x^\dagger=(a,-b,-c)$ is not an antiautomorphism, since
$(ij)^\dagger=0$ but $j^\dagger i^\dagger=ji=i+j$.  Directly,
\[
xx^\dagger=Q(x)\one-bc(i+j),
\]
which is not generally scalar.  Thus neither $Q$ nor $\dagger$ supplies a
composition-algebra conjugation.  The genuine antiautomorphism is~$*$ from
Proposition~\ref{prop:anti-involution}, but $xx^*$ is also not generally
scalar.
\end{remark}

\section{The commutator Lie algebra}

Define $[x,y]=xy-yx$.  An algebra is called Lie-admissible if this commutator
is a Lie bracket~\cite{GozeRemm2004}.

\begin{proposition}[Automatic Lie-admissibility in dimension three]
\label{prop:automatic-lie}
Every three-dimensional real algebra with a two-sided unit is Lie-admissible.
\end{proposition}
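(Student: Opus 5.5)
The plan is to reduce the Jacobi identity for the commutator to a statement about a two-dimensional quotient, where it holds for dimension reasons alone. Let $A$ be a three-dimensional real algebra with two-sided unit $\one$, and let $[x,y]=xy-yx$. This bracket is bilinear and alternating, so it suffices to show that the Jacobiator
\[
J(x,y,z)=[[x,y],z]+[[y,z],x]+[[z,x],y]
\]
vanishes. The map $J$ is trilinear and alternating, because it is the cyclic sum of an expression that is already antisymmetric in its first two arguments.

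First, I use the unit. Since $[\one,y]=\one y-y\one=0$ for every $y$, the element $\one$ is central for the bracket. Hence $J(\one,y,z)=[0,z]+[[y,z],\one]+[[z,\one],y]=0$, and every term with a $\one$ in it vanishes.

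Next, choose a basis $\{\one,u,v\}$ of $A$. By trilinearity and alternation, $J$ is determined by its values on triples of distinct basis vectors. The only such triple is $\{\one,u,v\}$ up to order, and it contains $\one$. Therefore $J\equiv0$.

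I expect no real obstacle. The only point needing care is confirming that the alternating property of $J$ follows from the antisymmetry of the bracket alone, with no appeal to associativity. This holds because $J(x,y,z)$ is, term by term, the cyclic sum of $[[x,y],z]$, and swapping two arguments permutes the three terms and negates each. Alternatively, one can say that $\Lambda^3$ of a three-dimensional space is one-dimensional, spanned by $\one\wedge u\wedge v$, on which $J$ vanishes by the first step. This argument does not use the specific table~\eqref{eq:table}, which explains why the paper treats Lie-admissibility as automatic rather than as a feature of $\B$.
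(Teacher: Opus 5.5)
Your proof is correct and matches the paper's: the Jacobiator is alternating and trilinear, it vanishes whenever one argument is $\one$ because $\one$ is central for the commutator, and so it vanishes on a basis $\{\one,u,v\}$ and hence identically. The opening sentence about reducing to a two-dimensional quotient does not describe the argument you actually give, so you could simply drop it.
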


\begin{proof}
For an anticommutative bilinear bracket the Jacobiator
\[
J(x,y,z)=[x,[y,z]]+[y,[z,x]]+[z,[x,y]]
\]
is alternating and trilinear.
For a commutator bracket in a unital algebra, $[\one,x]=0$, whence
$J(\one,u,v)=0$.  A basis has the form $\{\one,u,v\}$, so alternation
and trilinearity force $J=0$ identically.
\end{proof}

\begin{theorem}\label{thm:commutator}
For $x=(a,b,c)$ and $y=(d,e,f)$,
\begin{equation}\label{eq:commutator}
[x,y]=(ce-fb)(i+j).
\end{equation}
Consequently, the commutator algebra $\B^-$ is isomorphic to
\[
\R\oplus\mathfrak{aff}(1),
\]
where $\mathfrak{aff}(1)$ is the two-dimensional nonabelian Lie algebra.  In
particular, $\B$ is Lie-admissible and Malcev-admissible.  Furthermore,
\[
Z(\B^-)=\R\one,\qquad [\B^-,\B^-]=\R(i+j).
\]
The Lie algebra $\B^-$ is solvable of derived length two and is not nilpotent.
\end{theorem}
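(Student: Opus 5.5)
The plan is to compute the commutator on the basis first and then read off the Lie structure from that single formula. By bilinearity and since $\one$ is central, $[x,y]$ depends only on the $i,j$ components. From the table, $[i,i]=[j,j]=0$ and $[i,j]=ij-ji=-(i+j)$, hence $[j,i]=i+j$. Therefore
\[
[x,y]=(bf-ce)[i,j]=(ce-bf)(i+j),
\]
which is~\eqref{eq:commutator}. Lie-admissibility is already given by Proposition~\ref{prop:automatic-lie}. Malcev-admissibility follows because every Lie algebra is a Malcev algebra.

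Next I would identify the isomorphism type. From~\eqref{eq:commutator}, $[\B^-,\B^-]\subseteq\R(i+j)$, with equality since $[j,i]=i+j\neq0$. For the center, $[x,y]=0$ for all $y$ forces $ce-bf=0$ for all $(e,f)$, so $b=c=0$; thus $Z(\B^-)=\R\one$. To exhibit the decomposition, set $k=i+j$ and choose a complement vector such as $j$. Then $[j,k]=[j,i]+[j,j]=k$. Hence $\spanR\{j,k\}$ satisfies $[j,k]=k$, which is the nonabelian two-dimensional algebra $\mathfrak{aff}(1)$. The sum $\B^-=\R\one\oplus\spanR\{j,k\}$ is direct as vector spaces. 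Since $\one$ is central, it is also a direct sum of Lie ideals, giving $\B^-\cong\R\oplus\mathfrak{aff}(1)$.

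Finally, I would verify solvability and non-nilpotency. The derived series is $\B^-\supset\R k\supset[\R k,\R k]=0$, so the derived length is exactly two, since $\B^-$ is nonabelian. For non-nilpotency, the lower central series stabilizes: $[\B^-,\R k]\ni[j,k]=k$, so $[\B^-,[\B^-,\B^-]]=\R k\neq0$ for all terms. Equivalently, $\operatorname{ad}_j$ has eigenvalue $1$ on $k$, so it is not nilpotent, and Engel's theorem excludes nilpotency.

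There is no real obstacle here. The only care needed is the sign bookkeeping in $[i,j]=-(i+j)$ and checking that the chosen complement $\spanR\{j,k\}$ is indeed an ideal closed under the bracket.
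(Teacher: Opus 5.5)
Your proposal is correct and follows essentially the same route as the paper: compute the bracket, pass to the basis $\{\one,j,k\}$ with $[j,k]=k$, and read off the center, the derived algebra, the derived series, and the stabilizing lower central series. The only difference is that you cite Proposition~\ref{prop:automatic-lie} for Lie-admissibility, whereas the paper obtains it from the explicit bracket; both are valid. One small correction: in your alternative non-nilpotency argument, the non-nilpotency of $\operatorname{ad}_j$ rules out nilpotency by the elementary fact that nilpotent Lie algebras are ad-nilpotent, not by Engel's theorem, which is the converse.
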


\begin{proof}
Subtracting the two instances of~\eqref{eq:product} gives
\eqref{eq:commutator}.  In the basis $\{\one,j,k\}$, where $k=i+j$, the only
nonzero bracket between basis elements, up to skew-symmetry, is
\begin{equation}\label{eq:aff-bracket}
[j,k]=k.
\end{equation}
This is precisely the direct sum of a one-dimensional center and
$\mathfrak{aff}(1)$.  Equation~\eqref{eq:aff-bracket} proves the Jacobi
identity directly (or identifies the bracket with a known Lie algebra), so
$\B$ is Lie-admissible.  Every Lie algebra satisfies the Malcev identity, hence
$\B$ is also Malcev-admissible.

The same bracket gives $Z(\B^-)=\R\one$ and
$[\B^-,\B^-]=\R k$.  The next derived algebra is $[\R k,\R k]=0$, proving
solvability of derived length two.  On the other hand, the lower central
series stabilizes at $\R k$, because $[j,k]=k$; therefore it is not nilpotent.
\end{proof}

\begin{proposition}[Other admissibility identities]\label{prop:other-identities}
The associator is neither left-symmetric nor right-symmetric, and $\B$ is
not Jordan-admissible.
\end{proposition}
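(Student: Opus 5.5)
The plan is to exhibit explicit basis triples that witness each failure, using only the multiplication table~\eqref{eq:table} and the associator values already recorded in the proof of Corollary~\ref{cor:identity-failures}. Left-symmetry of the associator means $\assoc{x}{y}{z}=\assoc{y}{x}{z}$, and right-symmetry means $\assoc{x}{y}{z}=\assoc{x}{z}{y}$. A single basis counterexample suffices for each.

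For left-symmetry I would compare $\assoc{i}{j}{i}=\one$, from Corollary~\ref{cor:identity-failures}, with $\assoc{j}{i}{i}$. From the table,
\[
\assoc{j}{i}{i}=(ji)i-j(ii)=(i+j)i+j=-\one+i+2j,
\]
which differs from $\one$. For right-symmetry I would compare $\assoc{i}{i}{j}=-j$, again from Corollary~\ref{cor:identity-failures}, with $\assoc{i}{j}{i}=\one$. These are unequal, so the associator is not right-symmetric. Alternatively, formula~\eqref{eq:associator} gives both checks at once.

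For Jordan-admissibility, I would form the plus algebra $\B^+$ with product $x\circ y=\tfrac12(xy+yx)$. Its basis products are $i\circ i=j\circ j=-\one$ and $i\circ j=\tfrac12(i+j)$, with $\one$ still a unit. Note that $x\circ x=x^2$, so~\eqref{eq:square} computes squares in $\B^+$ as well. The task is then to find $x,y$ violating the Jordan identity
\[
(x^{2}\circ y)\circ x=x^{2}\circ(y\circ x).
\]
The main point is the choice of $x$. If $x$ lies in one of $\C_i,\C_j,\C_k$, then $x^2\in\spanR\{\one,x\}$ by Theorem~\ref{thm:slices}. The identity then holds trivially by commutativity and unitality of $\B^+$, so no such $x$ can serve as a witness.

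I would therefore take a generic element with $bc(b-c)\neq0$, for instance $x=i+2j$. Then~\eqref{eq:square} gives $x^2=-5\cdot\one+2i+2j$. I would take $y=i$, compute both sides through the bilinear $\circ$-table, and subtract. This is a short computation, and I expect a nonzero difference. If $y=i$ happens to give equality, I would try $y=j$ next, or use the linearized Jordan identity on basis elements, which reduces to finitely many table lookups.

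The only real obstacle is this last computation. One must avoid quadratic elements $x$ and keep the factor $\tfrac12$ in $\circ$ consistent throughout; both sides scale by the same power of $\tfrac12$, so the sign of any discrepancy is unaffected.
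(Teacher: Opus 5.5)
Your proposal is essentially the paper's proof: the paper uses the same witnesses, namely $\assoc{i}{j}{i}=\one$ versus $\assoc{j}{i}{i}=-\one+i+2j$, $\assoc{i}{i}{j}=-j$ versus $\assoc{i}{j}{i}$, and $x=i+2j$, $y=i$ in $\B^+$. The computation you left pending does succeed, giving $x^{2}\circ(y\circ x)-(x^{2}\circ y)\circ x=-\one+\tfrac12 i+\tfrac52 j\neq0$, so no fallback to $y=j$ or to the linearized identity is needed.
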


\begin{proof}
The values
\[
\assoc{i}{j}{i}=\one,\qquad
\assoc{j}{i}{i}=-\one+i+2j,\qquad
\assoc{i}{i}{j}=-j
\]
exclude symmetry in the first two and last two arguments, respectively.
Thus neither convention for a pre-Lie identity holds; compare
Section~1.2 of the preprint version of~\cite{GozeRemm2004}.
For Jordan-admissibility, the symmetrized product
$u\circ v=(uv+vu)/2$ would have to satisfy the Jordan identity.
Taking $x=i+2j$, $y=i$, and $x^{\circ2}=x\circ x$, direct multiplication gives
\[
x^{\circ2}\circ(y\circ x)-(x^{\circ2}\circ y)\circ x
=-\one+\tfrac12 i+\tfrac52j\neq0.
\]
\end{proof}

\section{A nontrivial family of deformations}\label{sec:deformations}

The vanishing of derivations and the triviality of the automorphism group
concern symmetries of one multiplication.  They do not imply that nearby
multiplications are isomorphic to it.

\begin{theorem}\label{thm:deformations}
For $t\in\R$, let $\B_t$ have the same unit and basis as $\B$, with
\[
i^2=j^2=-\one,\qquad ij=0,\qquad ji=t(i+j).
\]
Then $\B_1=\B$, and every $\B_t$ is Lie-admissible and absolutely
central simple. For $s,t\in\R$,
\[
\B_s\cong\B_t\quad\Longleftrightarrow\quad s^2=t^2.
\]
For $t\neq0$, one has $\Der(\B_t)=0$ and
$\Aut(\B_t)=\{\mathrm{id}\}$. At $t=0$,
$\Aut(\B_0)\cong O(2)$ and $\Der(\B_0)\cong\mathfrak{so}(2)$.
In particular, $\B$ is not locally rigid among three-dimensional real
unital Lie-admissible algebra laws, with the Euclidean topology on their
structure constants.
\end{theorem}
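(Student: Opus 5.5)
The plan is to treat the six assertions separately, reusing the earlier arguments for $\B$ with the parameter $t$ carried along. The identity $\B_1=\B$ is immediate from Definition~\ref{def:B}. Lie-admissibility of every $\B_t$ follows at once from Proposition~\ref{prop:automatic-lie}, since each $\B_t$ is a three-dimensional real unital algebra. For absolute central simplicity I will copy the proof of Theorem~\ref{thm:multiplication}. First I write $L_x$ and $R_x$ for $\B_t$; the entries equal to $c$ in $L_x$ coming from $ji$, and the corresponding entries of $R_x$, acquire a factor $t$. Then I take the same nine operators \eqref{eq:nine-ops} and compute $\det Q_t$ as a polynomial in $t$. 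I expect this to be the main obstacle, because $\det Q_t$ may vanish at isolated values of $t$, for instance at $t=0$. If it does, I would first try replacing one of the nine operators, such as $L_iR_i$, by another product such as $R_iR_j$ or $L_jR_j$, and look for a choice whose determinant has no real zeros. Failing a single choice, I would cover the finitely many bad $t$ with a second choice. At $t=0$ a direct check suffices: $L_i$, $L_j$, $L_i^2$, $L_iL_j$, $L_jL_i$ together with the identity already span a large part of $M_3(\R)$. Once $\mathcal M(\B_t)=\End(\B_t)$ and the determinant is nonzero in $\R$, the ideal and centroid arguments transfer verbatim to every extension $F/\R$.

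For the isomorphism classification, the direction $s=\pm t\Rightarrow\B_s\cong\B_t$ uses the linear map fixing $\one$ with $i\mapsto -i$ and $j\mapsto -j$. It preserves $i^2=j^2=-\one$ and $ij=0$. It also sends the relation $ji=t(i+j)$ to $(-j)(-i)=t(i+j)=-t\bigl((-i)+(-j)\bigr)$, so it is an isomorphism $\B_t\to\B_{-t}$.

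For the converse I will use an invariant built from the commutator. Subtracting the two products gives $[x,y]=t(ce-fb)k$ with $k=i+j$. So $\B_t$ is commutative exactly when $t=0$, and for $t\neq0$ the derived space $[\B_t,\B_t]$ is the line $\R k$. In $\B_t$ one has $k^2=-2\one+tk$. For $v=\lambda k$ with $\lambda\neq0$, write $v^2=\alpha\one+\beta v$; then $\alpha=-2\lambda^2$ and $\beta=t\lambda$. Hence $\beta^2/\alpha=-t^2/2$ does not depend on $\lambda$. An isomorphism fixes $\one$ and maps the derived line onto the derived line, and it preserves the pair $(\alpha,\beta)$ of any nonscalar element. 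Therefore $t^2$ is an isomorphism invariant, and together with the commutativity criterion this gives $s^2=t^2$.

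For $t\neq0$ I will rerun the proof of Theorem~\ref{thm:symmetries}. For derivations, applying $D$ to $i^2=-\one$ and to $j^2=-\one$ gives linear systems in the coordinates of $D(i)$ and $D(j)$. Their coefficients now involve $t$, and I expect them to force $D(i)=D(j)=0$ exactly when $t\neq0$. For automorphisms, the traces become $\operatorname{tr}L_{(a,b,c)}=3a+tc$ and $\operatorname{tr}R_{(a,b,c)}=3a+tb$. Invariance of these traces pins $\varphi(i)$ and $\varphi(j)$ down to one-parameter families, and imposing $\varphi(i)^2=\varphi(j)^2=-\one$ should kill the parameters when $t\neq0$.

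At $t=0$ I will use the structure $\B_0=\R\one\oplus V$ with $V=\spanR\{i,j\}$. There $v w=-\langle v,w\rangle\one$ for $v,w\in V$, where $\langle\,,\,\rangle$ is the Euclidean inner product; this is the Jordan algebra of the form noted in the introduction. I characterize $V$ intrinsically as $\{0\}$ together with the nonscalar $x$ with $x^2\in\R\one$ and $x$ of zero trace-part; this makes $V$ invariant under automorphisms. It follows that $\Aut(\B_0)$ is the orthogonal group of the form, namely $O(2)$, and differentiating gives $\Der(\B_0)\cong\mathfrak{so}(2)$.

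Finally, non-rigidity follows from the classification. The laws of $\B_t$ converge to those of $\B_1$ as $t\to1$, but $\B_t\not\cong\B_1$ for every $t\neq\pm1$. Every $\B_t$ lies in the stated class of three-dimensional real unital Lie-admissible algebras, so $\B$ is not locally rigid there.
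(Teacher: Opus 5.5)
Your proposal is correct, but its key step differs from the paper's. For $\B_s\cong\B_t\Rightarrow s^2=t^2$, the paper first solves $x\cdot_t x=-\one$, with a case split at $t^2=8$ where the extra roots $\pm w_t$ disappear. It then shows that the only ordered zero-product pairs among these roots are $(\varepsilon i,\delta j)$, so an isomorphism sends $i\mapsto\varepsilon i$ and $j\mapsto\delta j$. Comparing images of $ji$ gives $s=\delta t=\varepsilon t$, and the case $s=t$ yields $\Aut(\B_t)=\{\mathrm{id}\}$ at no extra cost.

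Your invariant is shorter and avoids both the root enumeration and the case distinction. It uses the derived line $[\B_t,\B_t]=\R k$ together with $k^2=-2\one+tk$: writing $\varphi(k)=\mu k$ and comparing $\varphi(k)^2$ with $-2\one+s\varphi(k)$ gives $\mu^2=1$ and $s=\mu t$. The price is that it controls only $\varphi(k)$, so automorphisms must be handled separately. Your trace route does work. It gives $\varphi(i)=(a,1-3a/t,-3a/t)$, and the two nonscalar coordinates of $\varphi(i)^2=-\one$ vanish only for $a\in\{0,t/3\}$ and $a\in\{0,t\}$ respectively, so $a=0$; the argument for $\varphi(j)$ is symmetric. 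Likewise $D(i)i+iD(i)=-2b\one+(2a+tc)i+tcj$ forces $D(i)=0$ exactly when $t\neq0$.

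At $t=0$ your description of $V$ is fine; in fact a nonscalar $x$ with $x^2\in\R\one$ already has zero scalar part, so your trace condition is redundant. Obtaining $\Der(\B_0)$ as the Lie algebra of $\Aut(\B_0)\cong O(2)$ is legitimate, via the standard fact that $\Der(A)$ is the Lie algebra of the closed subgroup $\Aut(A)\subseteq GL(A)$; the paper instead checks the derivations directly.

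The one unfinished point is simplicity at $t=0$, where the paper's determinant $-t^4$ vanishes. There $R_x=L_x$, so words of length at most two in $L_i,L_j$ span only seven dimensions. Hence neither your six listed operators nor a single substitution into the nine can reach $\End(\B_0)$. You need length-three words, as the paper does:
\begin{itemize}
\item first $E_{11}=-I-L_i^2-L_j^2$;
\item then $L_iE_{11}$, $E_{11}L_i$, $L_jE_{11}$, $E_{11}L_j$ give $E_{21},-E_{12},E_{31},-E_{13}$;
\item their products give $E_{23}$ and $E_{32}$, and hence all matrix units, over any extension field.
\end{itemize}
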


\begin{proof}
In coordinates the multiplication is
\[
(a,b,c)\mathbin{\cdot_t}(d,e,f)
=(ad-be-cf,\;ae+bd+tce,\;af+tce+cd).
\]
It depends polynomially on $t$, and Proposition~\ref{prop:automatic-lie}
applies.  The operators in~\eqref{eq:nine-ops}, formed with $\cdot_t$, have
vectorization determinant $-t^4$.  One can check this using
\[
L_i^{(t)}=\begin{pmatrix}0&-1&0\\1&0&0\\0&0&0\end{pmatrix},\quad
L_j^{(t)}=\begin{pmatrix}0&0&-1\\0&t&0\\1&t&0\end{pmatrix},\quad
R_i^{(t)}=\begin{pmatrix}0&-1&0\\1&0&t\\0&0&t\end{pmatrix},\quad
R_j^{(t)}=\begin{pmatrix}0&0&-1\\0&0&0\\1&0&0\end{pmatrix}.
\]
The proof of Theorem~\ref{thm:multiplication} therefore establishes absolute
central simplicity for $t\neq0$. At $t=0$, writing $E_{uv}$ for the matrix
units, one obtains
\[
E_{11}=-I-(L_i^{(0)})^2-(L_j^{(0)})^2,\qquad
E_{22}=I+(L_j^{(0)})^2,\qquad E_{33}=I+(L_i^{(0)})^2.
\]
Multiplying $L_i^{(0)}$ and $L_j^{(0)}$ on either side by $E_{11}$ gives
$E_{21},-E_{12},E_{31},-E_{13}$. Their products give $E_{23}$ and $E_{32}$.
Hence the multiplication algebra is again the full matrix algebra over
any extension of $\R$.

For $0<|t|<\sqrt8$, solving $x\cdot_t x=-\one$ gives exactly
\[
\mathcal S_t=\{\pm i,\pm j,\pm w_t\},\qquad
w_t=\frac{2(i+j)-t\one}{\sqrt{8-t^2}}.
\]
Indeed, the equations are
$a^2-b^2-c^2=-1$, $b(2a+tc)=0$, and $c(2a+tb)=0$.
For $bc\neq0$ they force $b=c=-2a/t$, giving the last two roots;
the other cases give $\pm i,\pm j$. If $t^2\geq8$, the same equations
have only $\pm i,\pm j$ as real solutions: for $bc\neq0$ the remaining
equation would be $(t^2-8)a^2=-t^2$, which is impossible.
Thus, for every $t\neq0$, among the roots the ordered zero-product pairs are precisely
$(\varepsilon i,\delta j)$, with $\varepsilon,\delta\in\{1,-1\}$.
To verify completeness when $0<|t|<\sqrt8$, products involving one of
$\pm w_t$ and one of $\pm i,\pm j$ have nonzero scalar coordinate
$\pm2/\sqrt{8-t^2}$; products within a single root line are nonzero, and
$ji=t(i+j)\neq0$.

Suppose $s,t\neq0$. If $\varphi:\B_s\to\B_t$ is an isomorphism,
then it fixes the unit and
must send $i$ to $\varepsilon i$ and $j$ to $\delta j$ by the preceding
zero-product characterization.  Preserving $ji=s(i+j)$ gives
\[
\varepsilon\delta t(i+j)=s(\varepsilon i+\delta j).
\]
Consequently $s=\delta t=\varepsilon t$, so $s=t$ or $s=-t$.
Conversely, the identity identifies equal parameters, and
$\one\mapsto\one$, $i\mapsto-i$, $j\mapsto-j$ defines an
isomorphism $\B_{-t}\to\B_t$ by direct inspection of the table.
For an automorphism $s=t\neq0$, necessarily
$\varepsilon=\delta=1$.

Finally, if $D(i)=a\one+bi+cj$, differentiating $i^2=-\one$ gives
$-2b\one+(2a+tc)i+tcj=0$, so $D(i)=0$ for $t\neq0$.
The corresponding equation for $j$ gives $D(j)=0$, and $D(\one)=0$ as
before. Hence $\Der(\B_t)=0$ for $t\neq0$.

At $t=0$, write $x=(a,u)$ with $u\in\R^2$. The product is
\[
(a,u)\cdot_0(d,v)=(ad-\langle u,v\rangle,av+du).
\]
It is commutative, whereas $\B_t$ is noncommutative for $t\neq0$,
so $\B_0$ is not isomorphic to any of them. Since
$\operatorname{tr}L_{(a,u)}^{(0)}=3a$ and
$L_{\varphi(x)}^{(0)}=\varphi L_x^{(0)}\varphi^{-1}$, an automorphism fixes
the scalar coordinate. Since it also fixes $\one$, it has the form
$(a,u)\mapsto(a,Ou)$. Preserving $(0,u)^2=-\langle u,u\rangle\one$
forces $\langle Ou,Ou\rangle=\langle u,u\rangle$. Polarization gives
$\langle Ou,Ov\rangle=\langle u,v\rangle$, so $O$ is orthogonal.
Conversely this last equality shows directly that every orthogonal $O$
preserves the displayed product. For derivations, differentiating $i^2=j^2=-\one$
gives $D(i)=c j$ and $D(j)=e i$, and differentiating $ij=0$ gives $e=-c$.
For the converse, each such map is $D(a,u)=(0,Ku)$ for a skew-symmetric
matrix $K$. In the derivation rule, the scalar coordinate of
$D(x)y+xD(y)$ is
$-\langle Ku,v\rangle-\langle u,Kv\rangle=0$, and the vector coordinate
is $dKu+aKv=K(du+av)$. These are exactly the coordinates of $D(xy)$.
This proves the assertion about $\mathfrak{so}(2)$. Finally, taking positive $t\to1$, $t\neq1$, gives
arbitrarily close nonisomorphic laws, which proves the claimed failure
of local rigidity.
\end{proof}

\begin{proposition}[Exceptional parameters and quadratic sections]
\label{prop:exceptional}
For $t\neq0$, the unital two-dimensional subalgebras of $\B_t$ are
$\spanR\{\one,i\}$, $\spanR\{\one,j\}$, and $\spanR\{\one,k\}$.
The first two are copies of $\C$, while
\[
\spanR\{\one,k\}\cong\R[X]/(X^2-tX+2).
\]
Consequently the third is isomorphic to $\C$ if $t^2<8$, to the dual numbers
$\R[\epsilon]/(\epsilon^2)$ if $t^2=8$, and to $\R\times\R$ if $t^2>8$.
For $t=0$, every plane $\spanR\{\one,bi+cj\}$ with $(b,c)\neq(0,0)$ is
a copy of $\C$, and $\B_0$ is the Jordan algebra of the bilinear form
$-\langle\,,\,\rangle$ on $\R^2$.
In addition to $\R\one$, the number of one-dimensional real subalgebras
of $\B_t$ is zero for $t^2<8$, one for $t^2=8$, and two for $t^2>8$.
\end{proposition}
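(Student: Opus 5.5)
The plan is to repeat the proof of Theorem~\ref{thm:slices} with the parameter $t$, then count one-dimensional subalgebras separately. For $x=bi+cj$ the product $\cdot_t$ gives
\[
x\cdot_t x=-(b^2+c^2)\one+t\,bc\,(i+j),
\]
because $ij=0$ and $ji=t(i+j)$. Any unital two-dimensional subalgebra has the form $\spanR\{\one,v\}$ with $v=bi+cj$, $(b,c)\neq(0,0)$, after subtracting the scalar part. Closure holds exactly when $t\,bc\,(i+j)\in\R v$.

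For $t\neq0$ this forces $bc=0$ or $b=c$, exactly as before. So the unital planes are $\spanR\{\one,i\}$, $\spanR\{\one,j\}$ and $\spanR\{\one,k\}$. The first two are copies of $\C$ since $i^2=j^2=-\one$. For the third, compute $k\cdot_t k=-2\one+tk$. By the remark at the start of the proof of Theorem~\ref{thm:slices}, the plane is $\R[X]/(X^2-tX+2)$. The discriminant $t^2-8$ then gives the three cases: $\C$ if negative, dual numbers if zero (with $\epsilon=k-\tfrac t2\one$), and $\R\times\R$ if positive, by the Chinese remainder theorem.

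For $t=0$ the cross term vanishes, so every $\spanR\{\one,bi+cj\}$ is closed. Its nonscalar generator satisfies $v^2=-(b^2+c^2)\one$, so it is a copy of $\C$. The Jordan identification is the classical fact already recalled in the introduction: the product $(a,u)\cdot_0(d,v)=(ad-\langle u,v\rangle,av+du)$ is the Jordan algebra of the form $-\langle\,,\,\rangle$, citing \cite{Schafer1966}.

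The counting of one-dimensional subalgebras is where care is needed. I interpret the statement for $t\neq0$, since for $t=0$ there are no nonscalar idempotents either. A one-dimensional subalgebra $\R x$ with $x\neq0$ has $x^2=\lambda x$. If $\lambda\neq0$ we rescale to an idempotent. If $\lambda=0$, $x$ is square-zero. So the count is the number of lines spanned by nonzero idempotents or nonzero square-zero elements, other than $\R\one$.

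I would solve these equations directly, as in Proposition~\ref{prop:quadratic}, with the analogue of \eqref{eq:square}:
\[
x^2=\bigl(a^2-b^2-c^2,\;b(2a+tc),\;c(2a+tb)\bigr).
\]
If $bc=0$ with $(b,c)\neq(0,0)$, the scalar coordinate leads to a sum of a square and a positive number equal to zero, so there is no solution. If $bc\neq0$, then $b=c$ and $x$ lies in $\spanR\{\one,k\}$. The question therefore reduces to the one-dimensional subalgebras of $\R[X]/(X^2-tX+2)$ other than $\R\one$.

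Inside this plane, $\C$ has none. The dual numbers have exactly one, namely $\R\epsilon$, since the idempotents are only $0$ and $\one$. The algebra $\R\times\R$ has the two lines spanned by the nontrivial idempotents $e$ and $\one-e$; its square-zero elements are trivial. This yields the counts $0$, $1$, $2$.

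The main obstacle is showing that every one-dimensional subalgebra lies in a unital plane. This follows because the $i$- and $j$-coordinate equations reduce to the cases $bc=0$ or $b=c$, as indicated above.
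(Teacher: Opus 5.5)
Your argument is correct for $t\neq0$ and is essentially the paper's. You reduce to the three unit-containing planes by eliminating in the $i$- and $j$-coordinates of $x^2=\lambda x$, where the paper uses $\det[\one,x,x^2]=tbc(b-c)=0$. You then count lines inside $\R[X]/(X^2-tX+2)$ via its three isomorphism types, where the paper solves $a^2+ta+2=0$ for $x=a\one+k$.

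The one omission is $t=0$, which the statement's count includes under $t^2<8$. You set it aside with the remark that there are no nonscalar idempotents. That does not rule out square-zero lines, and your elimination step $t(b-c)=0\Rightarrow b=c$ is unavailable at $t=0$.

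The gap closes at once from what you already proved. A nonscalar $x=a\one+u$ lies in $\spanR\{\one,u\}\cong\C$, which has no one-dimensional subalgebra other than $\R\one$. Equivalently, as in the paper, $x^2=2ax-(a^2+\langle u,u\rangle)\one$ with $a^2+\langle u,u\rangle>0$, so $x^2\in\R x$ forces $x\in\R\one$.
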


\begin{proof}
For $v=bi+cj$, one has
$v\cdot_t v=-(b^2+c^2)\one+tbc k$.
When $t\neq0$, the closure argument of Theorem~\ref{thm:slices} therefore
gives exactly the three planes. The polynomial for $k$ follows from
$k\cdot_t k=-2\cdot\one+tk$. A two-dimensional unital algebra with one
generator and this relation is the displayed associative polynomial
quotient; completing the square gives its three real isomorphism types.
In particular,
\[
(k-t\one/2)^2=(t^2/4-2)\one.
\]
At $t^2=8$ this is a nonzero square-zero element; at $t^2>8$ the elements
$\one/2\pm(k-t\one/2)/\sqrt{t^2-8}$ are nontrivial idempotents.
The square-zero line at $t^2=8$ and the two idempotent lines at $t^2>8$
are subalgebras not containing $\one$. To prove exhaustivity in dimension
one for $t\neq0$, let $x$ be non-scalar with $x^2\in\R x$.
Then $\det[\one,x,x^2]=tbc(b-c)=0$, so $x$ belongs to one of the three
unit-containing planes. A copy of $\C$ has no one-dimensional real
subalgebra other than $\R\one$. In the third plane, normalize the
coefficient of $k$ to write $x=a\one+k$. The condition becomes
$a^2+ta+2=0$, giving exactly zero, one, or two real lines according to
the sign of $t^2-8$. Thus the complete real subalgebra list for $\B_1$
cannot be transferred unchanged to all parameters.

At $t=0$, the relation $v^2=-(b^2+c^2)\one$ gives the stated complex
planes. For $x=(a,u)$, one has
$x^2=2ax-(a^2+\langle u,u\rangle)\one$.
The product is commutative, and this identity implies
$[L_{x^2},L_x]=0$, which is the Jordan identity. Every nonzero subalgebra
of $\B_0$ contains $\one$ by the same quadratic relation with positive
constant term; this also proves the assertion about its one-dimensional
subalgebras. The displayed product for $\B_0$ is precisely the classical
bilinear-form construction in
\cite[Chapter IV, Section 2, example D, p.~101]{Schafer1966}, with
the nondegenerate symmetric form $(u,v)\mapsto-\langle u,v\rangle$
on the two-dimensional space $\R^2$.
\end{proof}

\begin{remark}[Size of the ambient space of laws]
Fixing a unit $\one$ on a three-dimensional vector space leaves the four
products $i^2,ij,ji,j^2$ arbitrary.  The space of these laws is therefore
$\R^{12}$, and all are Lie-admissible.  The group of changes of basis fixing
$\one$ has dimension six, so every orbit has dimension at most six and
none is open in $\R^{12}$. This observation applies to every law in this
ambient space. For $\B$, the stabilizer has zero Lie algebra
$\Der(\B)$, and the orbit dimension is exactly six.
The content of Theorem~\ref{thm:deformations} is the explicit family and
its isomorphism criterion. It provides neither a classification of the
entire ambient space nor a formal or miniversal deformation theory.
\end{remark}

The parameter regimes are summarized in Table~\ref{tab:regimes}.
At $t=0$, the square equations give exactly the circle of roots
$bi+cj$ with $b^2+c^2=1$; at $t\neq0$ the root lists occur in the
proof of Theorem~\ref{thm:deformations}. Although the algebras with
distinct positive parameters are pairwise nonisomorphic, their commutator
Lie algebras are all isomorphic: in the basis $\{\one,j/t,k\}$,
$[j/t,k]_t=k$. Thus the commutator does not recover the parameter.

\begin{table}[ht]
\caption{Real parameter regimes. The axis planes are always copies of $\C$.
The column for $k$ lists the isomorphism type of $\spanR\{\one,k\}$.}
\label{tab:regimes}
\centering
\begin{tabular}{@{}ccccc@{}}
\toprule
Parameter & Roots of $-\one$ & Plane of $k$ & $\Aut(\B_t)$ & $\Der(\B_t)$ \\
\midrule
$t=0$ & $\{bi+cj:b^2+c^2=1\}$ & $\C$ & $O(2)$ & $\mathfrak{so}(2)$ \\
$0<t^2<8$ & $\pm i,\pm j,\pm w_t$ & $\C$ & $\{\mathrm{id}\}$ & $0$ \\
$t^2=8$ & $\pm i,\pm j$ & $\R[\epsilon]/(\epsilon^2)$ & $\{\mathrm{id}\}$ & $0$ \\
$t^2>8$ & $\pm i,\pm j$ & $\R\times\R$ & $\{\mathrm{id}\}$ & $0$ \\
\bottomrule
\end{tabular}
\end{table}

\clearpage
\section{Concluding remarks}

The polynomial $bc(b-c)$ describes both the quadratic locus and the
obstruction to power-associativity in $\B$. Its zero set consists of three
complex subalgebras whose nonzero elements have unique two-sided inverses.
Outside it, one element generates $\B$. The one-sided inverse criteria
further separate inversion from cancellation, even though the algebra
is absolutely central simple.

The family $\B_t$, classified here up to real isomorphism, shows that
trivial automorphisms and derivations coexist with nontrivial deformations.
Its parameters $0$ and $t^2=8$ account for changes in symmetries and
quadratic sections. These explicit calculations provide counterexamples
to associative intuitions and reproducible examples for structural
computations in nonassociative algebra.

\section*{Computational verification}
Exact symbolic calculations were checked with SymPy 1.14.0.
The ancillary file \nolinkurl{verify_certificates.py} reconstructs the
multiplication table and checks the polynomial identities and finite
certificates with exact arithmetic. It runs independently of the typesetting
files. The classification, positivity and convergence arguments are given
in the text; the computation supplements those proofs.
Responsibility for the article's contents rests with the author.

\small
\bibliographystyle{amsplain}
\bibliography{three_dimensional_lie_admissible_algebra}

\end{document}